\documentclass[11pt,a4paper]{article}
\title{\bf Dimension free Harnack inequalities \\on $\RCD(K, \infty)$ spaces}
\author{Huaiqian Li\footnote{Email: huaiqianlee@gmail.com. Partially supported by the National Natural Science Foundation of China (NSFC) No.11401403 and the Australian Research Council (ARC) grant DP130101302. Current address: Department of Mathematics, Macquarie University, NSW 2109 Sydney, Australia; email: huaiqian.li@mq.edu.au.}\vspace{3mm}\\
{\footnotesize School of Mathematics, Sichuan University, Chengdu 610064, P. R. China}
}
\date{}

\usepackage{amssymb,amsmath,amsfonts,amsthm,color,mathrsfs}

\setlength{\hoffset}{-0.4mm} \setlength{\voffset}{-0.4mm}
\setlength{\textwidth}{160mm} \setlength{\textheight}{235mm}
\setlength{\topmargin}{0mm} \setlength{\oddsidemargin}{0mm}
\setlength{\evensidemargin}{0mm} \setlength\arraycolsep{1pt}
\setlength{\headsep}{0mm} \setlength{\headheight}{0mm}

\def\R{\mathbb{R}}

\def\d{\textup{d}}

\def\BE{\textup{BE}}

\def\CD{\textup{CD}}
\def\Ch{\textup{Ch}}

\def\Hess{\textup{Hess}}
\def\Lip{\textup{Lip}}
\def\Ric{\textup{Ric}}

\def\RCD{\textup{RCD}}
\def\<{\langle}
\def\>{\rangle}
\def\Proof.{\noindent{\bf Proof. }}

\def\Ent{\textup{Ent}}

\def\newdot{{\kern.8pt\cdot\kern.8pt}}

\newtheorem{theorem}{Theorem}[section]
\newtheorem{lemma}[theorem]{Lemma}
\newtheorem{corollary}[theorem]{Corollary}
\newtheorem{proposition}[theorem]{Proposition}

\newtheorem{definition}[theorem]{Definition}
\theoremstyle{definition}\newtheorem{remark}[theorem]{Remark}

\begin{document}

\maketitle
\makeatletter 
\renewcommand\theequation{\thesection.\arabic{equation}}
\@addtoreset{equation}{section}
\makeatother 

\begin{abstract}
The dimension free Harnack inequality for the heat semigroup is established on the $\RCD(K,\infty)$ space, which is a non-smooth metric measure space having the Ricci curvature bounded from below in the sense of Lott--Sturm--Villani  plus the Cheeger energy being quadratic. As its applications, the heat semigroup entropy-cost inequality and contractivity properties of the semigroup are studied, and a strong enough Gaussian concentration implying the log-Sobolev inequality is also shown as a generalization of the one on the smooth Riemannian manifold.
\end{abstract}

{\bf MSC 2000:} primary 60J60, 31C25; secondary 49J52, 47D06

{\bf Keywords:} Harnack inequality; heat semigroup; metric measure space; Riemannian curvature

\section{Introduction}
Let $M$ be a finite dimensional complete and connected smooth Riemannian manifold with boundary either empty or convex and Riemannian distance $d$, and let $L=\Delta+\nabla V$ for some $V\in C^2(M)$ with the corresponding diffusion semigroup denoted by $\{P_t\}_{t\geq 0}$. Let
\begin{eqnarray}\label{classical BE}
\Ric-\Hess_V\geq K,
\end{eqnarray}
where $\Ric$ is the Ricci curvature, $\Hess_V$ is the Hessian of $V$ and $K\in \R$. By the Bochner formula, it is well known that \eqref{classical BE} is equivalent to the classical Bakry--Emery curvature-dimension condition, with infinite dimension, for $L$, proposed in the seminal work \cite{BakryEmery} in 1983.

In 1997, Wang \cite{Wang1997} introduced the dimension free Harnack inequality for the diffusion semigroup $\{P_t\}_{t\geq 0}$ under the assumption \eqref{classical BE}, which can be formulated as
\begin{eqnarray}\label{wangharnack}
|\left(P_t f\right)(x)|^p\leq \left(P_t |f|^p\right)(y) \exp\left\{\frac{p K d(x,y)^2}{2(p-1)\left(e^{2Kt}-1\right)}\right\},
\end{eqnarray}
for all $f\in C_b(M)$, $x,y\in M$ and $p>1$.

The Harnack inequality \eqref{wangharnack} has been extensively investigated and applied successfully to the study of functional inequalities, heat kernel estimates, transportation-cost inequalities, short time behavior of transition probabilities and so on. See \cite{Wang2005,Wang2013a} and references therein. One of the remarkable feature of the Harnack inequality \eqref{wangharnack}, as its name conveys, is its independence of the dimension of the diffusion operator, and hence it can be established for infinite dimensional operators.

Recall the original proof in \cite{Wang1997}, the pointwise $L^1$-gradient estimate of the operator $P_t$ in the form
\begin{eqnarray}\label{pointwiseL1}
|\nabla P_t f|\leq C(t) P_t|\nabla f|, \quad \mbox{for all }t>0\mbox{ and some constant } C(t)>0,
\end{eqnarray}
is crucial for the establishment of the Harnack inequality \eqref{wangharnack} on the smooth Riemannian manifold $M$. It is well known that in the smooth Riemannian context, the pointwise $L^1$-gradient estimate \eqref{pointwiseL1} is equivalent to the curvature-dimension condition \eqref{classical BE} and also equivalent to the Harnack inequality \eqref{wangharnack} (see e.g. \cite{Wang2005,Wang2013a}). However, unfortunately, in general, such as on the smooth sub-Riemannian manifold, the pointwise $L^1$-gradient estimate is not easy to get, not even on the non-smooth space. To the authors knowledge, only on the Heisenberg (type) group the pointwise $L^1$-gradient estimate is established for all $t>0$ (see \cite{BakryBaudoinBonnefontChafai,Eldredge2010,Li2006}). Recently, Baudoin--Garofalo \cite{BaudoinGarofalo} introduced the generalized curvature-dimension condition with respect to (w.r.t. for short) a class of smooth second order diffusion operator on the smooth connected manifold, which is the generalization of the classical Bakry--Emery curvature-dimension condition in the sub-Riemannian context. Under the generalized curvature-dimension condition, the dimension free Harnack inequality was established and also other functional inequalities (see \cite{BaudoinBonnefont} and the more general curvature-dimension condition case see \cite{Wang2012}).

From now on, we assume that $(X,d,\mu)$ is a metric measure space, which means that $(X,d)$ is a Polish space and $\mu$ is a nonnegative $\sigma$-finite Borel measure finite on metric balls.

In the pioneer works of Lott--Villani \cite{LottVillani2009} and Sturm  \cite{Sturm2006a,Sturm2006b}, via the convexity of the entropy functional, a notion of Ricci curvature bounded from below by $K$ and dimension bounded above by $N$ on the metric measure space $(X,d,\mu)$, denoted by $\CD(K,N)$ spaces, was proposed independently by the authors (note only the case $K=0$ or $N=\infty$ were considered in \cite{LottVillani2009} and see \cite{Villani2009} for an elaborate presentation). Here $K\in \R$ and $N\in [1,\infty]$. The crucial properties of this notion are the compatibility with the classical Bakry--Emery curvature-dimension condition on the smooth Rimannian manifold and the stability w.r.t. the measured Gromov--Hausdorff convergence. A strengthening of the Lott--Sturm--Villani $\CD(K,\infty)$ condition  is the so-called Riemannian curvature bounded from below, denoted by  $\RCD(K,\infty)$ (see Definition \ref{RCDspace} below), introduced by Ambrosio--Gigli--Savar\'{e} \cite{AmbrosioGigliSavare2011b} for the case that the reference measure $\mu$ is a probability measure (also see \cite{AmbrosioGigliMondinoRajala} for the $\sigma$-finite measure case). This notion is also stable under the measured  Gromov--Hausdorff convergence and rules out Finsler manifolds (since the linearity of the heat flow is required), and hence it is useful for describing the closure of the class of Riemannian manifolds with Ricci curvature bounded from below. Examples of the $\RCD(K,\infty)$ space include complete and finite dimensional spaces with Alexandrov curvature bounded from below (see \cite{GigliKuwadaOhta2013,ZhangZhu2010}), Euclidean spaces endowed with the Lebesgue measure, complete Riemannian manifolds with bounded geometry and limit spaces of smooth Riemannian manifolds with Ricci curvature bounded from below by $K$ (see \cite{CheegerColding1997,CheegerColding2000a,CheegerColding2000b}).

We should mention that, by the powerful calculus on the metric measure space $(X,d,\mu)$, under the $\CD(K,\infty)$ condition, Ambrosio--Gigli--Savar\'{e} \cite{AmbrosioGigliSavare2011a} proved the equivalence of the $L^2$-gradient flow generated by the Cheeger energy (as the role of the Dirichlet form plays) and the Wasserstein gradient flow of the relative entropy, which allows to call either of the gradient flows as a heat flow (see also \cite{KoskelaZhou2012} for a different starting point from a Dirichlet form on a measure space endowed with the distance induced by the Dirichlet form).  Recently, they introduced a weak Barkry--Emery condition $\BE(K,N)$ (see Definition \ref{BE} below), which is a proper integral form of the classic Bakry--Emery curvature-dimension condition, and showed that $\BE(K,\infty)$ is equivalent to $\CD(K,\infty)$ under the assumption of the Cheeger energy being quadratic (see \cite{AmbrosioGigliSavare2012}). This result fills the gap between the two approaches of Bakry--Emery and Lott--Sturm--Villani to the notion of Ricci curvature bounded from below.

We emphasize that, by the self-improvement property of the $\BE(K,\infty)$ condition, Savar\'{e} \cite{Savare2013} proved that
\begin{eqnarray}\label{L1gradient}
\sqrt{\Gamma(P_tf)}\leq e^{-Kt}P_t \sqrt{\Gamma(f)},\quad\mu\mbox{-a.e. in }X,
\end{eqnarray}
where $\{P_t\}_{t\geq 0}$ is the semigroup generated by the heat flow. It can be considered as the candidate of the point-wise $L^1$-gradient estimate \eqref{pointwiseL1}. Here $\Gamma$ is the \emph{carr\'{e} du champ} operator. See Section 2.3 for details. Note that \eqref{L1gradient} is crucial for the establishment of the dimension free Harnack inequality in our framework (see Theorem \ref{harnack} below).

Recently, Erbar--Kuwada--Sturm \cite{ErbarKuwadaSturm2013} introduced the the notion of $\RCD^*(K,N)$ spaces. The Li--Yau gradient estimate, Harnack type inequality and Bakry--Qian inequality for the heat flow were proved in \cite{GarofaloMondino} on the $\RCD^*(K,N)$ spaces $(X,d,\mu)$ with $\mu$ being a probability measure. But it seems not easy to generalize their results to the case that $\mu$ can be infinite (e.g. $\mu$ is a Borel measure), since the arguments depend heavily on $\mu$ being a probability measure.  As well, the log-Harnack inequality with dimension and its applications on transportation-cost inequalities on the $\RCD^*(K,N)$ space were considered by the author \cite{Lihuaiqian}. Recently, in a submitted paper \cite{JLZ2014} joint with R. Jiang and H. Zhang, sharp upper and low bounds on the heat kernel and its gradient were established in $\RCD^*(K,N)$ spaces, as well as some interesting applications, including the large time asymptotics of the heat kernel, stability of solutions to the heat equation, and the boundedness of (local) Riesz transforms.

The aim of this paper is to establish the dimension free Harnack inequality on non-smooth metric measure spaces with Riemannian curvature bounded from below, namely the so-called $\RCD(K,\infty)$ spaces. In Section 2, we present some preliminaries, especially the notion of $\RCD(K,\infty)$ spaces, and recall some known results such as the $L^1$-gradient estimate of the heat flow. In Section 3, we show the main result, Theorem \ref{harnack}, and its proof. Finally, as applications of the dimension free Harnack inequality, some of its consequences such as the log-Harnack inequality, entropy-cost inequality, contractivity properties of the semigroup and the strong enough Gaussian concentration implying the log-Sobolev inequality are shown in Section 4. Note that the main result seems new, but the idea of the proof is not.

\section{Preliminaries}
In this section, we introduce some necessary notations, definitions and recall some known results, following closely the recent papers \cite{AmbrosioGigliSavare2011a,AmbrosioGigliSavare2011b,AmbrosioGigliSavare2012,Savare2013}.

Let $(X,d)$ be a Polish space endowed with a nonnegative Borel measure $\mu$ with support $X$ and satisfying
\begin{eqnarray}\label{locallyfinite}
\mu(B(x,r))<\infty,\quad\mbox{for any }x\in X\mbox{ and }r>0,
\end{eqnarray}
where $B(x,r)$ is the ball of radius $r$ centered at $x$ in $X$ w.r.t. the metric $d$. We denote by $C(X)$ ($C_b(X)$) the (bounded) continuous functions on $X$, by $\Lip(X)$ ($\Lip_b(X)$) the space of (bounded) Lipschitz continuous functions on $X$, by $\mathcal{P}(X)$ the space of Borel probability measures on $(X,d)$ and by $\mathcal{P}_p(X)$, $p\in [1,\infty)$, the subset of $\mathcal{P}(X)$ with finite $p$-th moment, i.e., $\nu\in \mathcal{P}(X)$ such that
$$\int_X d(o,\cdot)^p\,\d\nu<\infty,\quad\mbox{for some (hence any) }o\in X.$$

\subsection{Gradient flows and Dirichlet forms}
Given a closed interval $I\subset\R$, let $AC^p(I;X)$, $p\in [1,\infty]$, be the set of all the absolutely continuous curves $\gamma: I\rightarrow X$ such that for some $g\in L^p(I)$, it holds
\begin{eqnarray}\label{ac}
d(\gamma(s),\gamma(t))\leq \int_s^t g(r)\,\d r,\quad\mbox{for any }s,t\in I,\,\ s<t.
\end{eqnarray}
It is true that, if $\gamma\in AC^p(I;X)$, then the metric slope
$$\lim_{\delta\rightarrow 0}\frac{d(\gamma(r+\delta),\gamma(r))}{|\delta|},$$
denoted by $|\dot{\gamma}|(r)$, exists for $\mathcal{L}^1$-a.e. $r\in I$, belongs to $L^p(I)$, and it is the minimal function $g$ such that \eqref{ac} holds (see \cite[Theorem 1.1.2]{AmbrosioGigliSavare2005} for the proof). The length of the absolutely continuous curve $\gamma: [0,1]\rightarrow X$ is defined by $\int_0^1 |\dot{\gamma}|(r)\,\d r$.  We say that an absolutely continuous curve $\gamma: [0,1]\rightarrow X$ has constant speed if $|\dot{\gamma}|(t)$ is a constant, and it is a geodesic if
$$d(\gamma(s),\gamma(t))=|t-s|d(\gamma(0),\gamma(1)),\quad\mbox{for any }s,t\in [0,1].$$
The local Lipschitz constant (or metric slope) of a function $f\in \Lip_b(X)$ is defined by
$$|Df|(x)=\limsup_{y\rightarrow x}\frac{|f(y)-f(x)|}{d(y,x)}.$$

For any $f\in L^2(X,\mu)$, define the Cheeger energy functional $\Ch: L^2(X,\mu)\rightarrow [0,\infty]$ by
\begin{eqnarray}\label{Cheeger}
\Ch(f)=\inf\left\{\liminf_{n\rightarrow \infty} \frac{1}{2}\int_X |Df_n|^2\,\d\mu: f_n\in \Lip_b(X),\,\ \|f_n - f\|_{L^2(X,\mu)}\rightarrow 0\right\}.
\end{eqnarray}
It is immediate to check that $\Ch$ is convex and lower semicontinuous in $L^2(X,\mu)$ (see e.g. \cite[Theorem 4.5]{AmbrosioGigliSavare2011a}), with domain dense in $L^2(X,\mu)$. So the classical theory of gradient flows in Hilbert spaces (see e.g. \cite{AmbrosioGigliSavare2005}) guarantees that for any $f\in L^2(X,\mu)$ there exists a unique gradient flow for $\Ch$ starting from $f$, which yields a semigroup on $L^2(X,\mu)$, denoted by $\{P_t\}_{t\geq 0}$. However, $\Ch$ is in general not necessarily a quadratic form.  The domain of $\Ch$ is the Sobolev space $W^{1,2}(X,d,\mu)$ defined by
$$W^{1,2}(X,d,\mu)=\{f\in L^2(X,\mu): \Ch(f)<\infty\},$$
endowed with the norm $\|f\|_{W^{1,2}}:=\big(\|f\|^2_{L^2(X,\mu)}+2\Ch(f)\big)^{1/2}$.

Note that $(W^{1,2}(X, d,\mu), \|\cdot\|_{W^{1,2}})$ is always a Banach space, while, in general, it is not a Hilbert space since $\Ch$ is not a quadratic form on $L^2(X,\mu)$.

Recall that the important properties of the semigroup $\{P_t\}_{t\geq 0}$ are that it is 1-homogeneous, i.e., $P_t(\lambda f)=\lambda P_t(f)$ for any $\lambda\in \R$ and $t\geq 0$, and it satisfies the maximum principle: for $f\in L^2(X,\mu)$, if $f\leq c$ (resp. $f\geq c$) $\mu$-a.e. in $X$ for some $c\in \R$, then $P_t(f)\leq c$ (resp. $P_t(f)\geq c$) $\mu$-a.e. in $X$ for any $t\geq 0$ (see \cite[Theorem 4.16]{AmbrosioGigliSavare2011a}).

Recall also that in the case that $\Ch$ is a quadratic form on $L^2(X,\mu)$, since $\Ch$ is 2-homogeneous and convex, this property is equivalent to the parallelogram rule, i.e,
\begin{eqnarray}\label{parallelogram}
\Ch(f+g)+\Ch(f-g)=2\Ch(f)+2\Ch(g),\quad\mbox{for every }f,g\in L^2(X,\mu).
\end{eqnarray}
Then we denote by $\mathcal{E}$ the Dirichlet form on $L^2(X,\mu)$ associated to $\Ch$ with domain $W^{1,2}(X,d,\mu)=:\mathcal{D}(\mathcal{E})$, which is a Hilbert space w.r.t. the norm $\|\cdot\|_{W^{1,2}}$ and $\Lip(X)$ is dense in it. In other words, $\mathcal{E}: \mathcal{D}(\mathcal{E})\times \mathcal{D}(\mathcal{E})\rightarrow \R$ is the unique bilinear symmetric form satisfying
\begin{eqnarray}\label{CheegerDirichlet}
\mathcal{E}(f,f)=2\Ch(f),\quad\mbox{for any }f\in W^{1,2}(X,d,\mu).
\end{eqnarray}
For $f,g\in W^{1,2}(X,d,\mu)$, $$\mathcal{E}(f,g):=\frac{1}{4}\big(\mathcal{E}(f+g,f+g)-\mathcal{E}(f-g,f-g) \big).$$
Good references for the theory of Dirichlet forms are \cite{BouleauHirsch,MaRochner,FukushimaOshimaTakeda2011}.

In the quadratic case, moreover, $\{P_t\}_{t\geq 0}$ is a semigroup of bounded self-adjoint linear operators on $L^2(X,\mu)$ and its generator is denoted by $L$ with domain $\mathcal{D}(L)$ dense in $\mathcal{D}(\mathcal{E})$. As mentioned above, $\{P_t\}_{t\geq 0}$ is the unique analytic Markov semigroup such that for any $f\in L^2(X,\mu)$, the curve $t\mapsto P_tf$ from $[0,\infty)$ to $L^2(X,\mu)$ satisfies $\lim_{t\downarrow 0} P_tf=f$ in $L^2(X,\mu)$, and for any $t>0$,
$$P_tf\in \mathcal{D}(L),\quad \frac{\d}{\d t}P_tf=LP_tf.$$

\subsection{$\RCD(K,\infty)$ spaces}

For $\mu_1,\mu_2\in\mathcal{P}_p(X)$ and $p\in [1,\infty)$, the $L^p$-Wasserstein distance $W_p(\mu_1,\mu_2)$ is defined by
\begin{eqnarray}\label{wasserstein}
W_p(\mu_1,\mu_2)=\inf_{\pi} \left\{\int_{X\times X} d(x,y)^p\,\d\pi(x,y)\right\}^{\frac{1}{p}},
\end{eqnarray}
where the infimum is taken among all $\pi\in \mathcal{P}(X\times X)$ with $\mu_1$ and $\mu_2$ being the first and the second marginal distributions of $\pi$, respectively. $\pi$ is called a coupling of $\mu_1$ and $\mu_2$. Since the cost $d^p$ with $p\in [1,\infty)$ is lower semi-continuous, the infimum in \eqref{wasserstein} is attained. Every coupling achieving the infimum is called an optimal one. See e.g. \cite{Villani2009}.

The relative entropy functional $\Ent_\mu: \mathcal{P}_2(X)\rightarrow [-\infty,\infty]$ w.r.t. $\mu$ is defined by (see \cite[Section 4.1]{Sturm2006a})
\begin{equation*}
\Ent_\mu(\rho)=
\begin{cases}
\lim_{\epsilon\downarrow 0}\int_{\{f>\epsilon\}} f\log f\,\d\mu,\quad &{\hbox{if}}\,\ \rho=f\mu,\\
\infty,\quad &{\hbox{otherwise}}.
\end{cases}
\end{equation*}
It coincides with $\int_{\{f>0\}}f\log f\,\d\mu$ and belongs to $[-\infty,\infty)$, provided $(f\log f)^+:=\max\{f\log f, 0\}$ is integrable w.r.t. $\mu$, and it is equal to $\infty$ otherwise. In particular, when $\mu\in \mathcal{P}(X)$, by Jensen's inequality, the functional $\Ent_\mu$ is nonnegative. The domain of the relative entropy, denoted by $\mathcal{D}(\Ent_\mu)$, is the set of $\rho\in \mathcal{P}_2(X)$ such that $\Ent_\mu(\rho)<\infty$.
\begin{definition}\label{RCDspace}
Let $K\in\R$ and let $(X,d)$ be a Polish space endowed with a nonnegative $\sigma$-finite Borel measure $\mu$ with support $X$ and satisfying \eqref{locallyfinite}. We say that $(X,d,\mu)$ is a $\CD(K,\infty)$ space if, for every couple of measures $\eta_0,\eta_1\in \mathcal{D}(\Ent_\mu)$, there exists a geodesic $\{\eta_t\}_{0\leq t\leq 1} \subset \mathcal{P}_2(X)$ such that
$$\Ent_\mu(\eta_t)\leq (1-t)\Ent_\mu(\eta_0)+t\Ent_\mu(\eta_1)-\frac{K}{2}t(1-t)W_2(\eta_0,\eta_1)^2,\quad\mbox{for any }t\in [0,1].$$
In addition, if $\Ch$ is a quadratic form on $L^2(X,\mu)$ according to \eqref{parallelogram}, then we say that $(X,d,\mu)$ has the Riemannian curvature bounded from below by $K$, and denote it by $\RCD(K,\infty)$ space for short.
\end{definition}

Notice that, if  $(X,d,\mu)$ is a $\CD(K,\infty)$ space, then the reference measure $\mu$ always satisfies that for some point $o\in X$ and some constants $c_1>0$,
$c_2\geq 0$,
\begin{eqnarray*}\label{exp}
\mu(B(o,r))\leq c_1 e^{c_2r^2},\quad\mbox{for any }r>0,
\end{eqnarray*}
and hence for $\rho\in \mathcal{P}_2(X)$, $\Ent_\mu(\rho)$ can be bounded from below in terms of the second order moment of $\rho$ by the change of the reference measure formula (see (2.5) in \cite{AmbrosioGigliMondinoRajala}), and $(X,d)$ is a length space, i.e., for every $x_0,x_1\in X$,
\begin{eqnarray*}\label{length}
d(x_0,x_1)=\inf\left\{\int_0^1 |\dot{\gamma}|(r)\,\d r: \gamma\in AC([0,1];X), \gamma(i)=x_i, i=0,1\right\}.
\end{eqnarray*}
See \cite{Sturm2006a} for these facts.

Recall also that, by \cite[Theorem 9.3]{AmbrosioGigliSavare2011a} or \cite[Theorem 6.2]{AmbrosioGigliMondinoRajala}, if  $(X,d,\mu)$ is a $\CD(K,\infty)$ space, then for every $f\in L^2(X,\mu)$ such that $\nu=f\mu\in \mathcal{D}(\Ent_\mu)$, the $W_2$-gradient flow of $\Ent_\mu$ starting from $\nu$, denoted by $[0,\infty)\ni t\mapsto h_t\nu$, and the $L^2$-gradient flow of $\Ch$ starting from $f$ conincides, i.e.,
\begin{eqnarray}\label{identifyofgradientflows}
(P_tf)\mu=h_t(f\mu),\quad\mbox{for all } t\geq 0,
\end{eqnarray}
and hence we call either of the gradient flows a heat flow and the induced semigroup a heat semigroup; moreover, the heat semigroup $\{P_t\}_{t\geq 0}$ has the mass preserving property w.r.t. $\mu$ (see \cite[Theorem 4.20]{AmbrosioGigliSavare2011a}) in the sense that
\begin{eqnarray}\label{mass preserving}
\int_X P_t f\,\d\mu=\int_X f\,\d\mu,\quad\mbox{for every }f\in L^2(X,\mu).
\end{eqnarray}

\subsection{$L^1$-gradient estimates of the heat semigroup}
Let $\mathcal{E}: L^2(X,\mu)\rightarrow [0,\infty]$ be a strongly local, symmetric Dirichlet form with domain $\mathcal{D}(\mathcal{E}):=\{f\in L^2(X,\mu): \mathcal{E}(f)<\infty\}$ dense in $L^2(X,\mu)$ and assume $\mathcal{E}$ generates a mass preserving Markov semigroup $\{P_t\}_{t\geq 0}$ in $L^2(X,\mu)$ with generator $L$ and domain $\mathcal{D}(L)$ dense in $\mathcal{D}(\mathcal{E})$.

Assume $\mathcal{E}$ admits a \emph{carr\'{e} du champ} operator $\Gamma$, which is a bilinear, continuous and symmetric map, denoted by $\Gamma:\mathcal{D}(\mathcal{E})\times \mathcal{D}(\mathcal{E})\rightarrow L^1(X,\mu)$, and it is uniquely characterized in the algebra $\mathcal{D}_\infty(\mathcal{E}):=\mathcal{D}(\mathcal{E})\cap L^\infty(X,\mu)$ by
$$\int_X \Gamma(f,g)\phi\,\d\mu=\frac{1}{2}\left(\mathcal{E}(f,g\phi) + \mathcal{E}(g,f\phi)-\mathcal{E}(fg,\phi)\right),$$
for any $f,g,\phi\in \mathcal{D}_\infty(\mathcal{E})$. We write $\Gamma(f)=\Gamma(f,f)$ for short.

Let
$$D_{\mathcal{E}}(L)=\{f\in \mathcal{D}(L): Lf\in \mathcal{D}(\mathcal{E})\},$$
and
$$D_{L^\infty}(L)=\{\phi\in \mathcal{D}(L)\cap L^\infty(X,\mu): L\phi\in L^\infty(X,\mu)\}.$$
Following \cite{AmbrosioGigliSavare2012}, we introduce the trilinear form ${\bf{\Gamma}}_2$ by defining
$${\bf{\Gamma}}_2[f,g](\phi)=\frac{1}{2}\int_X\left(\Gamma(f,g)L\phi-\Gamma(f,Lg)\phi-\Gamma(Lf,g)\phi\right)\,\d\mu,$$
for every $(f,g,\phi)\in D({\bf{\Gamma}}_2)$, where $D({\bf{\Gamma}}_2):=D_{\mathcal{E}}(L)\times D_{\mathcal{E}}(L)\times D_{L^\infty}(L)$. We set ${\bf{\Gamma}}_2[f](\phi)={\bf{\Gamma}}_2[f,f](\phi)$ for short.

Inspired by \cite{BakryEmery,Bakry1997}, Ambrosio--Gigli--Savar\'{e} \cite{AmbrosioGigliSavare2012} introduced a weak form of the Bakry--Emery curvature-dimension condition recently.
\begin{definition}\label{BE}
Let $K\in \R$ and $N\in [1,\infty]$. We say that a strongly local Dirichlet form $\mathcal{E}$ on $L^2(X,\mu)$ satisfies a weak curvature-dimension condition, denoted by $\BE(K,N)$, if it admits a \emph{carr\'{e} du champ} operator $\Gamma$ and
$${\bf{\Gamma}}_2[f](\phi)\geq K\int_X \Gamma(f)\phi\,\d\mu+\frac{1}{N}\int_X (Lf)^2\phi\,\d\mu,$$
for every $(f,\phi)\in D_{\mathcal{E}}(L)\times D_{L^\infty}(L)$ with $\phi\geq 0$. In particular, for $N=\infty$, $\BE(K,N)$ is denoted by $\BE(K,\infty)$, i.e.,
$${\bf{\Gamma}}_2[f](\phi)\geq K\int_X \Gamma(f)\phi\,\d\mu,$$
for every $(f,\phi)\in D_{\mathcal{E}}(L)\times D_{L^\infty}(L)$ with $\phi\geq 0$.
\end{definition}

From \cite[Corollary 2.3]{AmbrosioGigliSavare2012}, $\BE(K,\infty)$ is equivalent to the following inequality:
\begin{eqnarray}\label{L2gradient}
\Gamma(P_tf)\leq e^{-2Kt}P_t\Gamma(f),\quad\mu\mbox{-a.e. in }X,\mbox{ for every }t>0,\,\ f\in \mathcal{D}(\mathcal{E}).
\end{eqnarray}

Inequality \eqref{L2gradient} is a kind of the $L^2$-gradient estimate of the heat semigroup $P_t$. Following the idea of Proposition 2.3 in \cite{Bakry1997} in the classical Bakry--Emery curvature-dimension condition context, Savar\'{e} \cite[Corollary 3.5]{Savare2013} proved a kind of $L^1$-gradient estimate of $P_t$ by the self-improvement property of the weak curvature-dimension condition $\BE(K,\infty)$. We present the result in the next lemma, which is the key to establish the dimension free Harnack inequality.
\begin{lemma}\label{SavareL1estimate}
Let $(\mathcal{E}, \mathcal{D}(\mathcal{E}))$ be a strongly local and quasi-regular symmetric Dirichlet form on $L^2(X,\mu)$. If $\BE(K,\infty)$ holds with $K\in \R$, then for every $f\in \mathcal{D}(\mathcal{E})$ and $t>0$,
$$\sqrt{\Gamma(P_tf)}\leq e^{-Kt}P_t\sqrt{\Gamma(f)},\quad \mu\mbox{-a.e. in }X.$$
\end{lemma}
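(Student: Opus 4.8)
The plan is to derive the $L^1$-gradient estimate from the $L^2$-gradient estimate \eqref{L2gradient} via the self-improvement mechanism of $\BE(K,\infty)$, following Bakry's $\Gamma_2$-iteration idea adapted to the Dirichlet-form setting by Savar\'{e}. The heuristic in the smooth case is: if $\sqrt{\Gamma(P_tf)}$ is to satisfy a good bound, one studies the time derivative of $s\mapsto P_s\big(\sqrt{\Gamma(P_{t-s}f)+\delta}\big)$ for a regularizing parameter $\delta>0$; its derivative involves the quantity $L\sqrt{\Gamma(u)}-\tfrac{1}{\sqrt{\Gamma(u)}}\big(\Gamma(u,Lu)-\text{(correction)}\big)$, which by the pointwise inequality
\[
\sqrt{\Gamma(u)}\,L\sqrt{\Gamma(u)}\geq \Gamma(u,Lu)-K\Gamma(u)
\]
(a consequence of the self-improved $\BE(K,\infty)$, namely the refined Bochner inequality $\Gamma_2(u)\geq K\Gamma(u)+\Gamma(\sqrt{\Gamma(u)})$, i.e. the term $\Gamma(\sqrt{\Gamma(u)})$ that is discarded when passing from $\BE$ to \eqref{L2gradient}) is controlled by $-K\sqrt{\Gamma(u)}$. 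Integrating this differential inequality from $s=0$ to $s=t$ and letting $\delta\downarrow 0$ yields $\sqrt{\Gamma(P_tf)}\leq e^{-Kt}P_t\sqrt{\Gamma(f)}$.

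Concretely, I would proceed as follows. First, reduce to a dense, regular class of functions: by quasi-regularity and strong locality one may assume $f\in D_{\mathcal E}(L)$ (or even $f\in \mathcal D(L^2)$ bounded with bounded $\Gamma(f)$), and recover the general $f\in\mathcal D(\mathcal E)$ at the end by the $L^2$-continuity of $P_t$, the closability of $\mathcal E$, and lower semicontinuity, exactly as \eqref{L2gradient} is extended from a core to all of $\mathcal D(\mathcal E)$. Second, fix $t>0$, set $u_s=P_{t-s}f$ for $s\in[0,t]$, fix $\delta>0$, and consider for a nonnegative test function $\phi\in D_{L^\infty}(L)$ the map
\[
F(s)=\int_X P_s\!\left(\sqrt{\Gamma(u_s)+\delta}\right)\phi\,\d\mu=\int_X \sqrt{\Gamma(u_s)+\delta}\;P_s\phi\,\d\mu ,
\]
and differentiate in $s$. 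The chain rule for $\Gamma$ and the identity $\partial_s u_s=-Lu_s$ produce
\[
F'(s)=\int_X\Big(L\sqrt{\Gamma(u_s)+\delta}\;-\;\frac{\Gamma(u_s,Lu_s)}{\sqrt{\Gamma(u_s)+\delta}}\Big)P_s\phi\,\d\mu,
\]
after integrating by parts in the $LP_s\phi$ term. Third, invoke the self-improved Bochner inequality in its integrated (weak) form — this is precisely \cite[Theorem 3.4 / Corollary 3.5]{Savare2013} — to get, tested against $P_s\phi\geq0$,
\[
\int_X \frac{\Gamma(u_s,Lu_s)}{\sqrt{\Gamma(u_s)+\delta}}\,P_s\phi\,\d\mu\;\leq\;\int_X L\sqrt{\Gamma(u_s)+\delta}\;P_s\phi\,\d\mu\;+\;K\int_X \sqrt{\Gamma(u_s)+\delta}\;P_s\phi\,\d\mu,
\]
which says $F'(s)\geq -K F(s)$. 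Fourth, integrate this Gronwall-type inequality over $[0,t]$ to obtain $F(t)\geq e^{-Kt}F(0)$, i.e.
\[
\int_X \sqrt{\Gamma(P_tf)+\delta}\;\phi\,\d\mu\;\geq\;e^{-Kt}\int_X P_t\!\left(\sqrt{\Gamma(f)+\delta}\right)\phi\,\d\mu .
\]
Wait — I have the inequality pointing the wrong way for the desired conclusion; the correct bookkeeping runs the flow on $\Gamma(P_sf)$ rather than on $P_{t-s}$: one instead differentiates $s\mapsto P_{t-s}\big(\sqrt{\Gamma(P_sf)+\delta}\big)$ tested against $\phi$, whose derivative is $\int_X P_{t-s}\big(\text{(self-improved Bochner defect)}\big)\phi\,\d\mu\le K\,(\dots)$, giving monotonicity in the direction that yields, after $\delta\downarrow0$ (using $\sqrt{a+\delta}\downarrow\sqrt a$ and dominated/monotone convergence) and since $\phi\ge0$ was arbitrary in a class separating $L^1$,
\[
\sqrt{\Gamma(P_tf)}\;\le\;e^{-Kt}\,P_t\sqrt{\Gamma(f)}\qquad\mu\text{-a.e.}
\]

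The main obstacle is entirely at the level of regularity and rigor rather than of ideas: one must justify that $s\mapsto \sqrt{\Gamma(P_sf)+\delta}$ is an admissible curve on which the chain rule for $L$ and $\Gamma$ applies (this is why $\delta>0$ is kept, to stay away from the non-smooth point $\Gamma=0$), that all the integrations by parts against $P_{t-s}\phi\in D_{L^\infty}(L)$ are legitimate, and that the self-improvement of $\BE(K,\infty)$ — the passage from $\Gamma_2(u)\ge K\Gamma(u)$ to the stronger $\Gamma_2(u)\ge K\Gamma(u)+\Gamma\big(\sqrt{\Gamma(u)}\big)$ in the appropriate weak/measure-valued sense — holds in the quasi-regular Dirichlet form framework; all of this is exactly the content of \cite{Savare2013}, so in the present paper one simply quotes it. Finally, the extension from the regular core to arbitrary $f\in\mathcal D(\mathcal E)$ and the removal of $\delta$ are routine approximation arguments using the $L^2$-contractivity of $P_t$, Fatou's lemma, and the already-established \eqref{L2gradient} to control $\Gamma(P_tf_n)\to\Gamma(P_tf)$.
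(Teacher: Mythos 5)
The paper offers no proof of this lemma at all: it is quoted directly from Savar\'e \cite[Corollary 3.5]{Savare2013}, which is exactly what you conclude one should do, and your sketch correctly reproduces the mechanism behind Savar\'e's argument (the self-improved Bochner inequality $\Gamma_2(u)\ge K\Gamma(u)+\Gamma\big(\sqrt{\Gamma(u)}\big)$ fed into a Bakry-type interpolation with the regularization $\sqrt{\Gamma(\cdot)+\delta}$). The only slip is the sign in your first application of the refined Bochner inequality: writing $g=\sqrt{\Gamma(u)+\delta}$, the correct consequence is $\Gamma(u,Lu)/g\le Lg-Kg$ (minus, not plus), with which your original interpolant $F(s)=\int_X P_s\big(\sqrt{\Gamma(P_{t-s}f)+\delta}\big)\phi\,\d\mu$ already satisfies $F'\ge KF$ and hence $F(0)\le e^{-Kt}F(t)$, which is the desired bound, so the reversal of the flow direction was unnecessary (though the reversed interpolant you settle on is equally valid).
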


Now let $(X,d,\mu)$ be a $\RCD(K,\infty)$ space and let $\mathcal{E}$ be the symmetric Dirichlet form induced by the Cheeger energy according to \eqref{CheegerDirichlet}. It turns out that $\mathcal{E}$ is strongly local (see \cite[Theorem 3.17]{AmbrosioGigliSavare2012}), quasi-regular (see the proof of \cite[Theorem 4.1]{Savare2013}), and it admits a \emph{carr\'{e} du champ} operator $\Gamma$.

Indeed, with the operator $\Gamma$, there is an intrinsic way to define a pseudo metric on $X$ by
$$d_{\mathcal{E}}(x,y)=\sup\{|\psi(x)-\psi(y)|: \psi\in \mathcal{D}(\mathcal{E})\cap C(X), \Gamma(\psi)\leq 1\,\ \mu\mbox{-a.e. in }X \},$$
for any $x,y\in X$. Note that the main result in \cite[Theorem 4.17]{AmbrosioGigliSavare2012} shows that if $(X,d,\mu)$ is a $\RCD(K,\infty)$ space, then the weak curvature-dimension condition $\BE(K,\infty)$ holds for the Dirichlet form $(\mathcal{E}, \mathcal{D}(\mathcal{E}))$, and \cite[Theorem 3.9]{AmbrosioGigliSavare2012} implies $d_{\mathcal{E}}$ is a metric on $X$ such that
$$d_{\mathcal{E}}(x,y)=d(x,y),\quad\mbox{for any }x,y\in X.$$
Hence we can work indifferently with either the metric $d$ or $d_{\mathcal{E}}$.

\section{Main results and proofs}

From now on, let $(X,d,\mu)$ be a $\RCD(K,\infty)$ space with  $K\in \R$. Then, there is an one-parameter family of maps $h_t: \mathcal{P}_2(X)\rightarrow \mathcal{P}_2(X)$ such that,
\begin{eqnarray}\label{w2contraction}
W_2(h_t\nu_1,h_t\nu_2)\leq e^{-Kt}W_2(\nu_1,\nu_2),\quad\mbox{for every }\nu_1,\nu_2\in \mathcal{P}_2(X)\mbox{ and }t\geq 0.
\end{eqnarray}
It can be shown that $ h_t\nu\ll \mu$ for any $\nu\in \mathcal{P}(X)$ and $t>0$, and $\{P_t\}_{t\geq 0}$ can be uniquely extended to a continuous semigroup on $L^1(X,\mu)$, still denoted by $\{P_t\}_{t\geq 0}$, such that \eqref{identifyofgradientflows} holds for any $f\in L^1(X,\mu)$ with $f\mu\in \mathcal{P}_2(X)$.

For every bounded or nonnegative Borel function $f$, define
$$\tilde{P}_tf(x)=\int_X f(y)\,\d (h_t\delta_x)(y),$$
where $\delta_x$ is the Dirac measure at $x\in X$. Then $\tilde{P}_tf$ is a version of $P_tf$ for all $f\in L^2(X,\mu)$ and an extension of $P_t$ to a continuous contraction semigroup in $L^1(X,\mu)$. In addition, for every  $f\in L^2\cap L^\infty(X,\mu)$, $\tilde{P}_tf$ is pointwise everywhere defined, the map $(t,x)\mapsto \tilde{P}_tf(x)$ belongs to $C_b\left((0,\infty)\times X\right)$ and, furthermore, $\tilde{P}_tf\in \Lip_b(X)$ for every $t>0$. See \cite[Theorems 7.1 and 7.3]{AmbrosioGigliMondinoRajala} and \cite[Theorem 3.17]{AmbrosioGigliSavare2012} for these and more properties of $\tilde{P}_t$.

Now we present the main result in the next theorem and then give a proof of it.
\begin{theorem}\label{harnack}
Let $(X,d,\mu)$ be a $\RCD(K,\infty)$ space with  $K\in \R$ and let $p>1$. For any $f\in L^1(X,\mu)+L^\infty(X,\mu)$, $t>0$, $\epsilon\in [0,1]$ and $x,y\in X$, the Harnack inequality
\begin{eqnarray}\label{harnack1}
\big[\big|\tilde{P}_tf\big|+\epsilon\big]^p(x)\leq \tilde{P}_t\left[(|f|+\epsilon)^p\right](y)\exp\left\{\frac{p K d(x,y)^2}{2(p-1)\left(e^{2Kt}-1\right)}\right\}
\end{eqnarray}holds. In particular,
\begin{eqnarray}\label{harnack2}
\big|\big(\tilde{P}_tf\big)(x)\big|^p\leq \big(\tilde{P}_t |f|^p\big)(y)\exp\left\{\frac{p K d(x,y)^2}{2(p-1)\left(e^{2Kt}-1\right)}\right\}.
\end{eqnarray}
\end{theorem}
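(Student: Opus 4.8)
The plan is to follow Wang's classical strategy of interpolating along a geodesic and differentiating, but carried out at the level of the heat flow on the $\RCD(K,\infty)$ space, where the $L^1$-gradient estimate of Lemma \ref{SavareL1estimate} plays the role of the pointwise $L^1$-gradient estimate \eqref{pointwiseL1}. First I would reduce to a convenient class of functions: by the 1-homogeneity of $\{P_t\}$, the density and regularization properties of $\tilde P_t$ (it maps $L^2\cap L^\infty$ into $\Lip_b(X)$ for $t>0$, and $(t,x)\mapsto\tilde P_tf(x)$ is continuous and bounded), and a monotone/dominated convergence argument, it suffices to prove \eqref{harnack1} for $f\in L^2\cap L^\infty(X,\mu)$ with $f\geq 0$ bounded below by a positive constant; the general $f\in L^1+L^\infty$ and the case $\epsilon=0$ then follow by truncation and by letting the lower bound and $\epsilon$ tend to $0$. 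Inequality \eqref{harnack2} is the special case $\epsilon=0$, so it is enough to establish \eqref{harnack1}.

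Next, fix such an $f$ (so that $\tilde P_sf\geq c>0$ everywhere for $s\in[0,t]$ and $\tilde P_sf\in\Lip_b(X)$), fix $x,y\in X$ and $t>0$, choose a constant-speed geodesic $\gamma:[0,1]\to X$ from $y$ to $x$ (which exists since a $\CD(K,\infty)$ space is a length space and, being $\RCD$, is in fact geodesic), and set $g=(f+\epsilon)$. Define the interpolation
\begin{eqnarray}\label{plan-phi}
\Phi(s)=\tilde P_{t-s}\Big[\big(\tilde P_s g\big)^{p(s)}\Big](\gamma(s)),\qquad s\in[0,t],
\end{eqnarray}
for a suitable increasing function $p(\cdot)$ with $p(0)=1$ and $p(t)=p$, so that $\Phi(0)=\tilde P_t[g^p](y)$ raised appropriately — more precisely I would arrange it so $\Phi(0)=\tilde P_t[(|f|+\epsilon)^p](y)$ and $\Phi(t)=[(\tilde P_tf+\epsilon)(x)]^{p}$ up to an exponential factor, following the exact bookkeeping in Wang's proof. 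Differentiating $\log\Phi(s)$ in $s$ produces three contributions: the time-derivative of the outer heat flow ($-L$ acting), the time-derivative of the inner heat flow ($+pL$ acting, via $\tfrac{d}{ds}\tilde P_sg=L\tilde P_sg$), the derivative $p'(s)$ hitting the exponent, and the spatial derivative along $\gamma$ of the outer argument, which is controlled by $|\dot\gamma|(s)\sqrt{\Gamma(\cdot)}=d(x,y)\sqrt{\Gamma(\cdot)}$. Using the chain rule $L(u^{p})=p u^{p-1}Lu+p(p-1)u^{p-2}\Gamma(u)$ for the inner flow, the two $L$-terms combine to leave a nonnegative "$\Gamma$" term $p(s)(p(s)-1)(\tilde P_sg)^{p(s)-2}\Gamma(\tilde P_sg)$, while the spatial term contributes a term of the form $d(x,y)\sqrt{\Gamma((\tilde P_sg)^{p(s)})}$. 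Here one invokes Lemma \ref{SavareL1estimate} in the form $\sqrt{\Gamma(\tilde P_sg)}\leq e^{-Ks}\tilde P_s\sqrt{\Gamma(g)}$ only if needed; the cleaner route is to apply Savare's estimate to the outer flow to move $\sqrt{\Gamma}$ past $\tilde P_{t-s}$, and then complete the square in the pointwise inequality $a\sqrt{b}\leq \lambda b+\tfrac{a^2}{4\lambda}$ with $\lambda$ matched to the coefficient $p(s)(p(s)-1)/(\cdot)$ of the $\Gamma$ term, so that the $\Gamma$ terms cancel and one is left with $\tfrac{d}{ds}\log\Phi(s)\leq \big(\tfrac{p'(s)}{\text{stuff}} + \tfrac{d(x,y)^2 e^{-2Ks}}{\text{stuff}}\big)$; choosing $p(s)$ to solve the resulting ODE (Wang's choice $p(s)=1+(p-1)\tfrac{e^{2Ks}-1}{e^{2Kt}-1}$, up to conventions) makes the bracket integrate exactly to the constant $\tfrac{pKd(x,y)^2}{2(p-1)(e^{2Kt}-1)}$. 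Integrating from $0$ to $t$ and exponentiating yields \eqref{harnack1}.

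The main technical obstacle is justifying the differentiation in \eqref{plan-phi} and the use of the chain rule for $L$ in this non-smooth setting: one must know that $\tilde P_sg$ is regular enough (in $\mathcal{D}(L)$, with $\Gamma(\tilde P_sg)\in L^\infty$ and the requisite integrability of $(\tilde P_sg)^{p(s)-2}\Gamma(\tilde P_sg)$), that $s\mapsto\Phi(s)$ is absolutely continuous with the expected derivative, and that $s\mapsto\gamma(s)$ together with the regularizing properties of $\tilde P_{t-s}$ (mapping into $\Lip_b$, jointly continuous) makes the composition differentiable; the positivity of $\tilde P_sg\geq c>0$ is exactly what is needed so that the powers $u\mapsto u^{p(s)}$ and $u\mapsto u^{p(s)-2}$ stay bounded and smooth on the relevant range, and it is also what permits the reduction to the good class of $f$ at the start. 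A secondary point is ensuring that Lemma \ref{SavareL1estimate}, which holds $\mu$-a.e., can be used pointwise here — this is handled by working with the continuous versions $\tilde P_t$ and using the continuity of $(t,x)\mapsto\tilde P_tf(x)$, together with an approximation argument to pass from inequalities valid $\mu$-a.e. to their evaluation along the geodesic $\gamma$. Once these regularity issues are dispatched — essentially by citing the fine properties of $\tilde P_t$ from \cite{AmbrosioGigliMondinoRajala,AmbrosioGigliSavare2012,Savare2013} recalled in Section 2 — the computation is the same as Wang's, as the paper itself remarks that the idea of the proof is not new.
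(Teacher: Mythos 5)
Your overall strategy --- interpolate between $(\tilde P_t f)^p(x)$ and $\tilde P_t f^p(y)$ simultaneously along the heat flow and along a curve joining $x$ and $y$, and control the spatial term by Savar\'e's $L^1$-gradient estimate (Lemma \ref{SavareL1estimate}) upgraded to a pointwise bound --- is exactly the strategy of the paper, and your reduction and approximation steps at the beginning and end are sound. The bookkeeping differs: you use a varying exponent $p(s)$, while the paper keeps $p$ fixed and instead reparametrizes the curve by $\alpha(s)=(e^{2Ks}-1)/(e^{2Kt}-1)$, so that after Cauchy--Schwarz the $\Gamma$-terms cancel without the extra $u^{p(s)}\log u$ term that a moving exponent produces. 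Either bookkeeping works on a manifold; that choice is not the issue.

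The issue is that the step you yourself flag as ``the main technical obstacle'' is precisely the part of the proof that is nontrivial in the $\RCD$ setting, and your plan for it (the chain rule $L(u^{p})=pu^{p-1}Lu+p(p-1)u^{p-2}\Gamma(u)$ applied to $u=\tilde P_s g$, plus ``absolute continuity with the expected derivative'') cannot simply be dispatched by citation: it requires knowing $u^{p(s)}\in\mathcal D(L)$ and justifying the interchange of the two flows, and the composed map depends on $s$ through three slots (outer time, inner time, point on the curve), so even formally it cannot be differentiated by a naive chain rule. The paper's resolution is specific: (i) Lemma \ref{lemmaofharnack1} (i.e.\ \cite[Lemma 4.5]{AmbrosioGigliSavare2012}) shows directly that $s\mapsto\int_X\theta(P_{t-s}f)\,\d(h_s\nu)$ is $C^1$ with derivative $\int_X\theta''(P_{t-s}f)\Gamma(P_{t-s}f)\,\d(h_s\nu)$, so the two generator terms are never computed separately and no pointwise chain rule for $L$ is ever invoked; (ii) the dependence on the point along the curve is handled by proving that $G(r,s)$ is Lipschitz in each variable separately (using \eqref{L2gradient} and the $W_1$-contraction) and then applying the one-sided derivative bound of \cite[Lemma 4.3.4]{AmbrosioGigliSavare2005} to $s\mapsto G(s,s)$; (iii) the $\mu$-a.e.\ gradient bound from Lemma \ref{SavareL1estimate} is converted into a true pointwise bound on the slope via Lemma \ref{lemmaofharnack2}, which is what legitimizes evaluation along the curve. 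You identify (iii) but not (i) or (ii), and (i)--(ii) are the heart of the adaptation to the non-smooth setting. A second, smaller error: an $\RCD(K,\infty)$ space is a length space but need not be geodesic (no local compactness is assumed), so your constant-speed geodesic from $y$ to $x$ may not exist; the paper works with arbitrary Lipschitz curves and minimizes over them at the very end, which costs nothing. With Lemma \ref{lemmaofharnack1} substituted for your chain-rule computation and the geodesic replaced by almost length-minimizing Lipschitz curves, your outline becomes the paper's proof.
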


Here $f\in L^1(X,\mu)+L^\infty(X,\mu)$ means that $f$ can be written as $f=g_1+g_2$ such that $g_1\in L^1(X,\mu)$ and $g_2\in L^\infty(X,\mu)$. The idea of the proof is from \cite[Lemma 2.1]{Wang1997}. But some efforts are needed to make a careful modification of the original proof in the aforementioned reference to adapt to our more abstract context.

We first borrow a lemma from \cite[Lemma 4.5]{AmbrosioGigliSavare2012} and omit its proof here.
\begin{lemma}\label{lemmaofharnack1}
Let $(X,d,\mu)$ be a $\RCD(K,\infty)$ space with  $K\in \R$. Assume that $\theta: [0,\infty)\rightarrow \R$ is a twice continuously differentiable function. Let $f\in \Lip_b(X)\cap \mathcal{D}(\mathcal{E})$ be nonnegative and let $\nu\in \mathcal{P}(X)$. Then the function
$$G(s):=\int_X \theta(P_{t-s}f)\,\d (h_s\nu),\quad s\in [0,t]$$
belongs to $C([0,t])\cap C^1((0,t))$, and for any $s\in (0,t)$, it holds
\begin{eqnarray}\label{lemma-1}
G'(s)=\int_X \theta''(P_{t-s}f)\Gamma(P_{t-s}f)\,\d (h_s\nu).
\end{eqnarray}
\end{lemma}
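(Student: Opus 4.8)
The plan is to recognize that $G$ depends on $s$ through two channels — the evolving integrand $u_s:=P_{t-s}f$ and the evolving measure $h_s\nu$ — and to differentiate each channel separately before recombining them via the chain rule for the generator. First I would record the regularity inputs. Because $f\in\Lip_b(X)\cap\mathcal{D}(\mathcal{E})$ is nonnegative and bounded, the maximum principle confines $u_s$ to the range $[\inf f,\sup f]$, so $\theta,\theta',\theta''$ are uniformly bounded along the flow; analyticity of $\{P_t\}_{t\geq 0}$ gives $u_s\in\mathcal{D}(L)$ with $\partial_s u_s=-Lu_s$ as a curve in $L^2(X,\mu)$, while \eqref{L2gradient} together with Lemma \ref{SavareL1estimate} controls $\Gamma(u_s)$ in $L^1(X,\mu)$. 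For $s>0$ the measure $h_s\nu$ is absolutely continuous with respect to $\mu$, with density $\rho_s=\tilde{P}_s^{\ast}\nu$ solving $\partial_s\rho_s=L\rho_s$ in the weak sense; this is precisely what lets me make sense of $\mu$-a.e. objects such as $\Gamma(u_s)$ under the integral for $s\in(0,t)$, and explains why the formula \eqref{lemma-1} is asserted only on the open interval.

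Next I would compute the derivative through the difference-quotient splitting
\[
\frac{G(s+\delta)-G(s)}{\delta}
=\int_X \frac{\theta(u_{s+\delta})-\theta(u_s)}{\delta}\,\d(h_{s+\delta}\nu)
+\frac{1}{\delta}\int_X \theta(u_s)\,\d\big(h_{s+\delta}\nu-h_s\nu\big).
\]
In the first term the inner quotient converges to $\theta'(u_s)\,\partial_s u_s=-\theta'(u_s)Lu_s$ and the measures $h_{s+\delta}\nu$ converge to $h_s\nu$, so by dominated convergence, using the uniform bounds recorded above, this term tends to $-\int_X\theta'(u_s)Lu_s\,\d(h_s\nu)$. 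The second term is the weak heat equation for the density $\rho_s$ tested against the fixed function $g=\theta(u_s)\in\mathcal{D}(\mathcal{E})$, and it converges to $\int_X L[\theta(u_s)]\,\d(h_s\nu)$. Adding the two contributions gives
\[
G'(s)=\int_X\big(L[\theta(u_s)]-\theta'(u_s)Lu_s\big)\,\d(h_s\nu).
\]

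Finally I would invoke the diffusion (chain-rule) property of the strongly local Dirichlet form $\mathcal{E}$ equipped with the carr\'{e} du champ $\Gamma$: for $u=u_s\in\mathcal{D}(L)$ with $\Gamma(u)\in L^1(X,\mu)$ and $\theta\in C^2$ with bounded first and second derivatives on the range of $u$, one has $\theta(u)\in\mathcal{D}(L)$ and $L[\theta(u)]=\theta'(u)Lu+\theta''(u)\Gamma(u)$. Substituting collapses the integrand to $\theta''(u_s)\Gamma(u_s)$, yielding \eqref{lemma-1}; continuity of $G$ on the closed interval $[0,t]$ then follows from the weak continuity of $s\mapsto h_s\nu$ together with the regularity of $s\mapsto P_{t-s}f$ and the continuity properties recorded before the theorem. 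The main obstacle I anticipate is precisely the rigorous justification of this last chain rule together with the membership $\theta(u_s)\in\mathcal{D}(L)$ in the abstract metric-measure setting; the two difference-quotient passages also demand care, since one must uniformly dominate the integrands while the reference measures $h_{s+\delta}\nu$ themselves vary with $\delta$, and the argument genuinely requires $s>0$ so that $h_s\nu\ll\mu$ and the $\mu$-a.e.-defined quantity $\Gamma(u_s)$ is meaningful under the integral.
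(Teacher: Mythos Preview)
The paper does not prove Lemma~\ref{lemmaofharnack1}; it simply borrows it from \cite[Lemma~4.5]{AmbrosioGigliSavare2012} and omits the argument. So there is no in-paper proof to compare against, and your task was really to reconstruct (a version of) the AGS argument.

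Your outline is the standard one and matches what AGS do: split the $s$-dependence into the integrand $u_s=P_{t-s}f$ and the measure $h_s\nu$, differentiate each, and cancel via the diffusion chain rule $L[\theta(u)]=\theta'(u)Lu+\theta''(u)\Gamma(u)$. The regularity bookkeeping you list (range confinement of $u_s$, $\Gamma(u_s)\in L^\infty\cap L^1$ from $f\in\Lip_b(X)\cap\mathcal D(\mathcal E)$ and \eqref{L2gradient}, absolute continuity of $h_s\nu$ for $s>0$) is exactly what is needed, and your remark that \eqref{lemma-1} only makes sense on $(0,t)$ because $\Gamma(u_s)$ is a $\mu$-a.e.\ object is correct.

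Two points where your write-up would need tightening before it becomes a proof rather than a sketch. First, the passage ``the second term is the weak heat equation for the density $\rho_s$ tested against $g=\theta(u_s)$'' presupposes enough regularity of $\rho_s$ to integrate by parts; for a general $\nu\in\mathcal P(X)$ (in the application $\nu=\delta_x$) the density $\rho_s$ need not lie in $\mathcal D(\mathcal E)$ or even $L^2$. The clean way around this---and what AGS effectively do---is to use duality $\int_X g\,\d(h_r\nu)=\int_X\tilde P_r g\,\d\nu$ and differentiate $r\mapsto\tilde P_r g$, exploiting that $g=\theta(u_s)\in\mathcal D(L)$ (see the next point) and that $\tilde P_r g\in\Lip_b(X)$ so that integration against~$\nu$ is pointwise meaningful. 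Second, the chain rule $L[\theta(u)]=\theta'(u)Lu+\theta''(u)\Gamma(u)$ in this abstract setting requires $\Gamma(u)\in L^2(X,\mu)$, not merely $L^1$; you should note explicitly that $\Gamma(u_s)\in L^1\cap L^\infty\subset L^2$ here, which legitimises the identity. You flagged both issues as obstacles, which is fair, but in a full proof they have to be resolved rather than acknowledged.
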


The next lemma is a slight modification of \cite[Proposition 3.11]{AmbrosioGigliSavare2012} in the $\RCD(K,\infty)$ space case and its proof is the same as the one there. So we also omit it.
\begin{lemma}\label{lemmaofharnack2}
Let $(X,d,\mu)$ be a $\RCD(K,\infty)$ space with  $K\in \R$. If, for $g\in C_b(X)\cap \mathcal{D}(\mathcal{E})$, a bounded upper semicontinuous function $\xi: X\rightarrow [0,\infty)$ satisfies $\sqrt{\Gamma(g)}\leq \xi$ $\mu$-a.e. in $X$, then $|Dg|(x)\leq \xi(x)$ for every $x\in X$.
\end{lemma}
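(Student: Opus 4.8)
The plan is to deduce the pointwise-everywhere slope bound from the $\mu$-a.e.\ Dirichlet-form gradient bound by localizing the identity $d=d_{\mathcal{E}}$ recalled at the end of Section~2. I first record the trivial \emph{global constant} case, which is the backbone of everything: if $h\in \mathcal{D}(\mathcal{E})\cap C(X)$ satisfies $\Gamma(h)\leq 1$ $\mu$-a.e., then $h$ is itself an admissible competitor in the supremum defining $d_{\mathcal{E}}$, so that $|h(x)-h(y)|\leq d_{\mathcal{E}}(x,y)=d(x,y)$ for \emph{every} $x,y\in X$, with no exceptional set --- precisely because $h$ is continuous and enters the supremum pointwise. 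Rescaling, $\Gamma(h)\leq c^2$ a.e.\ forces $h$ to be globally $c$-Lipschitz, hence $|Dh|\leq c$ everywhere. The entire difficulty is that the hypothesis only provides $\sqrt{\Gamma(g)}\leq \xi$ with $\xi$ nonconstant and merely upper semicontinuous.

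Next comes the reduction. Fix $x_0\in X$ and $\varepsilon>0$, and set $c:=\xi(x_0)+\varepsilon$. By upper semicontinuity of $\xi$ there is $\rho>0$ such that $\xi<c$, and therefore $\Gamma(g)\leq c^2$ $\mu$-a.e., on the ball $B(x_0,\rho)$. It then suffices to establish the local estimate $|g(y)-g(x_0)|\leq c\,d(y,x_0)$ for all $y$ in a neighborhood of $x_0$: this yields $|Dg|(x_0)=\limsup_{y\to x_0}|g(y)-g(x_0)|/d(y,x_0)\leq c$, and letting $\varepsilon\downarrow 0$ gives the claimed $|Dg|(x_0)\leq \xi(x_0)$. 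Continuity of $g$ (which holds since $g\in C_b(X)$) is what upgrades this to a genuine pointwise bound at every point rather than merely a.e.

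The localization itself I would carry out by truncating $g$ against distance barriers. Let $d_\rho:=d(\cdot,x_0)\wedge \rho$, a bounded $1$-Lipschitz function with $\Gamma(d_\rho)\leq 1$ a.e.\ (a consequence of $d=d_{\mathcal{E}}$), and consider $\tilde g:=\big(g\wedge(g(x_0)+c\,d_\rho)\big)\vee\big(g(x_0)-c\,d_\rho\big)$. By construction $|\tilde g(z)-g(x_0)|\leq c\,d_\rho(z)\leq c\,d(z,x_0)$ holds pointwise, and by strong locality together with the Markov (min--max) calculus for $\Gamma$ one gets $\Gamma(\tilde g)\leq \max\{\Gamma(g),\,c^2\Gamma(d_\rho)\}\leq c^2$ $\mu$-a.e.\ \emph{on} $B(x_0,\rho)$. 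The target is then to show $\tilde g=g$ on a neighborhood of $x_0$, which by the barrier bound is exactly equivalent to the desired local Lipschitz estimate for $g$. I expect to close this by feeding $\tilde g/c$ as a competitor into a localized form of the supremum defining $d_{\mathcal{E}}$ and exploiting that $(X,d)$ is a length space, so that $x_0$ and a nearby $y$ are joined by almost-length-minimizing curves lying inside $B(x_0,\rho)$, where only the controlled gradient of $\tilde g$ comes into play.

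The step I expect to be the main obstacle is precisely this localization: converting the a.e.\ energy bound $\Gamma(g)\leq c^2$ on the ball into a pointwise Lipschitz bound for the continuous representative on that ball. The naive devices fail --- multiplying $g$ by a spatial cutoff inflates the gradient by terms of order $1/\rho$ supported exactly where $\Gamma(g)$ is large, while pure value-truncation leaves $\Gamma(g)$ untouched on the (possibly large-gradient) region outside $B(x_0,\rho)$ where the truncated function still coincides with $g$, so no globally small-gradient competitor can simply agree with $g$ where $g$ is rough. The correct resolution combines the strong locality of $\Gamma$, the length-space structure, and the global identity $d=d_{\mathcal{E}}$, exactly as in the proof of \cite[Proposition 3.11]{AmbrosioGigliSavare2012} (which rests on \cite[Theorem 3.9]{AmbrosioGigliSavare2012}); the only modification is that the constant Lipschitz bound there is replaced here by the local constant $c=\xi(x_0)+\varepsilon$ supplied by the upper semicontinuity of $\xi$, after which the limit $\varepsilon\downarrow 0$ completes the argument.
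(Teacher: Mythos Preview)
The paper itself omits the proof of this lemma entirely, stating only that it is ``a slight modification of \cite[Proposition 3.11]{AmbrosioGigliSavare2012}'' and that ``its proof is the same as the one there.'' Your sketch is therefore already more detailed than what the paper provides, and it correctly identifies the same reference as the source of the argument, together with the right ingredients: the identity $d=d_{\mathcal{E}}$, the reduction via upper semicontinuity of $\xi$ to a local constant bound $c=\xi(x_0)+\varepsilon$, and the length-space structure to keep short curves inside the ball $B(x_0,\rho)$ where $\Gamma(g)\leq c^2$.

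One cautionary remark on your intermediate construction: the truncated function $\tilde g=(g\wedge(g(x_0)+c\,d_\rho))\vee(g(x_0)-c\,d_\rho)$ does \emph{not} by itself furnish a global competitor for $d_{\mathcal{E}}$, since on the set $\{|g-g(x_0)|<c\rho\}\setminus B(x_0,\rho)$ one has $\tilde g=g$ and hence $\Gamma(\tilde g)=\Gamma(g)$, which is uncontrolled there. You already diagnose this correctly in your final paragraph; I only stress that $\tilde g$ is an illustration of the obstruction rather than the device that overcomes it. The actual mechanism in \cite[Theorem~3.9 and Proposition~3.11]{AmbrosioGigliSavare2012} uses the length-space property together with the identification of $\sqrt{\Gamma(g)}$ with the minimal weak upper gradient to obtain the pointwise local Lipschitz bound directly, without needing a globally admissible competitor that agrees with $g$ near $x_0$. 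Since you defer to that reference for this step, your outline is in agreement with the paper's treatment.
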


With the above preparation at hand, we can prove Theorem \ref{harnack} now.
\begin{proof}[Proof of Theorem \ref{harnack}] Without loss of generality, we may assume $f\geq 0$ since $\big|\tilde{P}_tf\big|\leq \tilde{P}_t|f|$. Let $\theta_\epsilon(r)=(r+\epsilon)^p$ for $p>1$, $r\geq 0$ and $\epsilon\in (0,1]$. In addition, assume at the moment $f\in \Lip_b(X)\cap \mathcal{D}(\mathcal{E})\cap L^1(X,\mu)$. Let $\gamma: [0,1]\rightarrow X$ be a Lipschitz continuous curve connecting $x$ and $y$ such that $\gamma_0=x$ and $\gamma_1=y$. Let $t>0$ and set
$$\alpha(s)=\frac{e^{2Ks}-1}{e^{2Kt}-1},\quad s\in [0,t].$$
Then $\alpha(0)=0$ and $\alpha(t)=1$. Let $\tilde{\gamma}_r=\gamma_{\alpha(r)}$ for $r\in [0,t]$. For $s\in (0,t)$ and $r\in [0,t]$, set
\begin{eqnarray*}\label{main1}
G(r,s)=-\log \int_X \theta_\epsilon(P_{t-s}f)\,\d (h_s\delta_{\tilde{\gamma}_r}).
\end{eqnarray*}

On the one hand, by Lemma \ref{lemmaofharnack1} with $\nu$ there replaced by $\delta_{\tilde{\gamma}_r}$, for any $r\in [0,t]$, the map $s\mapsto G(r,s)$ is continuous in $[0,t]$ and continuously differentiable in $(0,t)$ with
\begin{eqnarray}\label{main2}
\frac{\partial}{\partial s}G(r,s)=-\frac{\int_X \theta''_\epsilon(P_{t-s}f)\Gamma(P_{t-s}f)\,\d (h_s\delta_{\tilde{\gamma}_r})}{\tilde{P}_s\left(\theta_\epsilon(P_{t-s}f)\right)(\tilde{\gamma}_r)},
\end{eqnarray}
which immediately shows that the maps $G(r,\cdot)$ are Lipschitz continuous in $[0,t]$ with uniform Lipschitz constant w.r.t. $r\in [0,t]$, according to \eqref{L2gradient} and the fact $\sqrt{\Gamma(f)}\leq |D f|$ $\mu$-a.e. in $X$ (see e.g. \cite[Theorem 3.12]{AmbrosioGigliSavare2012}). On the other hand, noting that $\tilde{\gamma}$ is Lipschitz and by \eqref{w2contraction},
$$W_1(h_t\delta_x,h_t\delta_y)\leq e^{-Kt}d(x,y),\quad\mbox{for all }x,y\in X\mbox{ and }t\geq 0;$$
hence we deduce that the map $r\mapsto h_s\delta_{\tilde{\gamma}_r}$ is Lipschitz continuous in $[0,t]$ w.r.t. the $L^1$-Wasserstein distance $W_1$ uniformly in $s\in [0,t]$. By the fact that $\left\{\theta_\epsilon(P_{t-s}f)\right\}_{s\in [0,t]}$ is equi-Lipschitz, we know that the maps $G(\cdot,s)$ are Lipschitz continuous in [0,t] with uniform Lipschitz constant w.r.t. $s\in [0,t]$. Thus, the map $s\mapsto G(s,s)$ is Lipschitz continuous in $[0,t]$.

Let
$$I_1=\lim_{u\downarrow 0} \frac{G(s,s+u)-G(s,s)}{u},$$
and
$$I_2=\limsup_{u\downarrow 0}\frac{G(s,s)-G(s-u,s)}{u}.$$
Applying \cite[Lemma 4.3.4]{AmbrosioGigliSavare2005}, we get
\begin{eqnarray}\label{main3}
\frac{\d}{\d s}G(s,s)&\leq& I_1+I_2,\quad \mathcal{L}^1\mbox{-a.e. in } (0,t).
\end{eqnarray}

From \eqref{main2}, we easily get
\begin{eqnarray}\label{main3.1}
I_1&=&-\frac{\int_X \theta''_\epsilon(P_{t-s}f)\Gamma(P_{t-s}f)\,\d (h_s\delta_{\tilde{\gamma}_s})}{\tilde{P}_s\left(\theta_\epsilon(P_{t-s}f)\right)(\tilde{\gamma}_s)}\cr
&=&-\frac{p(p-1)\tilde{P}_s\big[(P_{t-s}f+\epsilon)^{p-2}\Gamma(P_{t-s}f)\big](\tilde{\gamma}_s)}
{\tilde{P}_s (\theta_\epsilon(P_{t-s}f))(\tilde{\gamma}_s)}.
\end{eqnarray}

For the estimate of $I_2$, it is also straightforward but needs some work. We claim:
\begin{eqnarray}\label{L1main}
\big|D\tilde{P}_s(\theta_\epsilon(P_{t-s}f))\big|\leq e^{-Ks} \tilde{P}_s\big[\theta'_\epsilon(P_{t-s}f)\sqrt{\Gamma(P_{t-s}f)}\,\big].
\end{eqnarray}
In fact, by Lemma \ref{SavareL1estimate} and the chain rule (which can be applied, since we can take $\theta_\epsilon(P_{t-s}f)-\theta_\epsilon(0)$ in place of $\theta_\epsilon(P_{t-s}f)$ without changing the local Lipschitz constant of \eqref{L1main}),
\begin{eqnarray*}\label{claim}
\sqrt{\Gamma\big(\tilde{P}_s(\theta_\epsilon(P_{t-s}f))\big)}\leq e^{-Ks} \tilde{P}_s\big[\theta'_\epsilon(P_{t-s}f)\sqrt{\Gamma(P_{t-s}f)}\,\big],\quad\mu\mbox{-a.e. in }X.
\end{eqnarray*}
Let $\xi$ be the right hand side of \eqref{L1main}. Since $f\in \Lip_b(X)\cap \mathcal{D}(\mathcal{E})\cap L^1(X,\mu)$, we have $\Gamma(P_{t-s}f)\in L^\infty(X,\mu)$, and hence $\xi\in \Lip_b(X)$ for $s>0$ by the regularizing property of $\tilde{P}_s$ (see e.g. \cite[Theorem 7.1]{AmbrosioGigliMondinoRajala} and the second paragraph in Section 3). Thus, by Lemma \ref{lemmaofharnack2}, we obtain \eqref{L1main}.

Applying \eqref{L1main}, we have
\begin{eqnarray}\label{main3.2}
I_2&=&\limsup_{u\downarrow 0}\frac{1}{u}\left[\log\int_X \theta_\epsilon(P_{t-s}f)\,\d (h_s\delta_{\tilde{\gamma}_{s-u}})-  \log\int_X \theta_\epsilon(P_{t-s}f)\,\d (h_s\delta_{\tilde{\gamma}_{s}})\right]\cr
&=&\limsup_{u\downarrow 0}\frac{1}{u}\left[\log \tilde{P}_s(\theta_\epsilon(P_{t-s}f))(\tilde{\gamma}_{s-u})- \log \tilde{P}_s(\theta_\epsilon(P_{t-s}f))(\tilde{\gamma}_{s})\right]\cr
&\leq &\frac{\big|D\tilde{P}_s(\theta_\epsilon(P_{t-s}f))\big|(\tilde{\gamma}_s)\, |\dot{\gamma}_{\alpha(s)}| \, |\alpha'(s)|}{\tilde{P}_s(\theta_\epsilon(P_{t-s}f))(\tilde{\gamma}_s)}\cr
&\leq & \frac{e^{-Ks}|\dot{\gamma}_{\alpha(s)}|\, |\alpha'(s)| \, \tilde{P}_s\big[\theta'_\epsilon(P_{t-s}f)\sqrt{\Gamma(P_{t-s}f)}\,\big](\tilde{\gamma}_s)}{\tilde{P}_s(\theta_\epsilon(P_{t-s}f))(\tilde{\gamma}_s)}.
\end{eqnarray}

Then combining \eqref{main3}, \eqref{main3.1} and \eqref{main3.2}, we get
\begin{eqnarray}\label{main4}
&&-\frac{\d}{\d s}G(s,s)\cr
&\geq& \frac{p(p-1)\tilde{P}_s\big[(P_{t-s}f+\epsilon)^{p-2}\Gamma(P_{t-s}f)\big](\tilde{\gamma}_s)}
{\tilde{P}_s(\theta_\epsilon(P_{t-s}f))(\tilde{\gamma}_s)}\cr
&& - \frac{e^{-Ks}|\dot{\gamma}_{\alpha(s)}|\, |\alpha'(s)|\tilde{P}_s\big[p (P_{t-s}f+\epsilon)^{p-1}\sqrt{\Gamma(P_{t-s}f)}\,\big](\tilde{\gamma}_s)}{\tilde{P}_s(\theta_\epsilon(P_{t-s}f))(\tilde{\gamma}_s)}\cr
&=& \frac{p\tilde{P}_s\left\{(P_{t-s}f+\epsilon)^p \left[(p-1)\frac{\Gamma(P_{t-s}f)}{(P_{t-s}f+\epsilon)^2} -e^{-Ks}|\dot{\gamma}_{\alpha(s)}| \alpha'(s)\frac{\sqrt{\Gamma(P_{t-s}f)}}{P_{t-s}f+\epsilon}  \right]   \right\}(\tilde{\gamma}_s)}{\tilde{P}_s(\theta_\epsilon(P_{t-s}f))(\tilde{\gamma}_s)}\cr
&\geq& \frac{p}{\tilde{P}_s(\theta_\epsilon(P_{t-s}f))(\tilde{\gamma}_s)}\tilde{P}_s\left[\theta_\epsilon(P_{t-s}f)\left(-\frac{e^{-2Ks}|\dot{\gamma}_{\alpha(s)}|^2 \alpha'(s)^2}{4(p-1)} \right) \right](\tilde{\gamma}_s)\cr
&=&-\frac{pe^{-2Ks}|\dot{\gamma}_{\alpha(s)}|^2 \alpha'(s)^2}{4(p-1)},\quad \mathcal{L}^1\mbox{-a.e. in } (0,t),
\end{eqnarray}
where in the last inequality we used the Cauchy--Schwarz inequality. Integrating both sides of \eqref{main4} w.r.t. $\d s$ in $(0,t)$ and minimizing w.r.t. $\gamma$ connecting $x,y$, we have
\begin{eqnarray*}
&&\log \tilde{P}_t\big( (f+\epsilon)^p\big)(y)- \log\big(\tilde{P}_tf+\epsilon \big)^p(x)\cr
&\geq& -\int_0^t \frac{pe^{-2Ks}\alpha'(s)^2 d(x,y)^2}{4(p-1)}\,\d s\\
&=&-\frac{p K d(x,y)^2}{2(p-1)(e^{2Kt}-1)}.
\end{eqnarray*}
Thus, we obtain \eqref{harnack1} for every $f\in \Lip_b(X)\cap \mathcal{D}(\mathcal{E})\cap L^1(X,\mu)$.

Now for general $f\in L^\infty(X,\mu)$ with $f\geq 0$,  we choose a uniformly bounded sequence $\{f_n\}_{n\geq 1}$ from $\Lip_b(X)\cap \mathcal{D}(\mathcal{E})\cap L^1(X,\mu)$ such that $f_n\rightarrow f$ $\mu$-a.e. as $n\rightarrow\infty$. Indeed, we can use the regularity property of  $P_t$ to construct $f_n$. Hence, $\tilde{P}_tf_n$ converges to $\tilde{P}_t f$ pointwise as $n\rightarrow \infty$. Thus, we get \eqref{harnack1} for $f\in L^\infty(X,\mu)$. Finally, by a truncation argument, \eqref{harnack1} holds for any $f\in L^1(X,\mu)+L^\infty(X,\mu)$.

Letting $\epsilon\downarrow 0$, we have \eqref{harnack2}.
\end{proof}

\section{Applications of the Harnack inequality}
In this section, we quickly show some consequences of the Harnack inequality which generalize the ones on the smooth Riemannian manifold and are natural to expect in our more abstract setting.

An immediate consequence of the Harnack inequality in Theorem \ref{harnack} is the log-Harnack inequality, which is also proved in \cite{AmbrosioGigliSavare2012} by the same idea used in the proof of \cite[Theorem 1.1(6)]{Wang2011}. Also, it can be proved in an elementary way; see \cite[Propositions 2.1 and 2.2]{Wang2010}.
\begin{proposition}\label{loghrnack}
Let $(X,d,\mu)$ be a $\RCD(K,\infty)$ space with  $K\in \R$. For $f\in L^1(X,\mu)+L^\infty(X,\mu)$ with $f\geq 0$, $t>0$ and $x,y\in X$, it holds
\begin{eqnarray}\label{logharnack1}
\tilde{P}_t\left(\log f\right)(x)\leq \log \big(\tilde{P}_tf \big)(y) + \frac{ K d(x,y)^2}{2\left(e^{2Kt}-1\right)}.
\end{eqnarray}
\end{proposition}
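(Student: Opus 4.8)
The plan is to deduce the log-Harnack inequality \eqref{logharnack1} from the power Harnack inequality \eqref{harnack2} of Theorem \ref{harnack} by letting the exponent $p$ tend to $\infty$. The cases $\tilde P_tf(y)\in\{0,\infty\}$ being immediate, I may assume $0<\tilde P_tf(y)<\infty$, and by a routine truncation (carried out at the end) it is enough to prove \eqref{logharnack1} for an $f$ with $0<c\le f\le C<\infty$ $\mu$-a.e.

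For such an $f$, the function $f^{1/p}$ lies in $L^\infty(X,\mu)$ with $f^{1/p}\ge c^{1/p}>0$ $\mu$-a.e.; applying \eqref{harnack2} to $f^{1/p}$ yields
\[
\big(\tilde P_tf^{1/p}\big)^p(x)\le \big(\tilde P_tf\big)(y)\exp\Big\{\frac{pKd(x,y)^2}{2(p-1)(e^{2Kt}-1)}\Big\}.
\]
By the maximum principle $\tilde P_tf^{1/p}\ge c^{1/p}>0$ and $\tilde P_tf\ge c>0$, so taking logarithms is legitimate and gives
\[
p\log\big(\tilde P_tf^{1/p}\big)(x)\le \log\big(\tilde P_tf\big)(y)+\frac{p}{p-1}\cdot\frac{Kd(x,y)^2}{2(e^{2Kt}-1)}.
\]
As $\tfrac{p}{p-1}\to1$, the right-hand side converges to that of \eqref{logharnack1} when $p\to\infty$. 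To identify the limit of the left-hand side I would set $g_p:=p\big(f^{1/p}-1\big)=p\big(e^{(\log f)/p}-1\big)$; a uniform Taylor estimate on the bounded range of $\log f$ shows $\|g_p-\log f\|_{L^\infty(X,\mu)}\to0$, and since $\tilde P_t$ is linear, order-preserving and unital ($\tilde P_t 1=1$) on bounded functions, $\tilde P_tf^{1/p}=1+\tfrac1p\,\tilde P_tg_p$ with $\tilde P_tg_p(x)\to\tilde P_t(\log f)(x)$; consequently $p\log(\tilde P_tf^{1/p})(x)=p\log\big(1+\tfrac1p\,\tilde P_tg_p(x)\big)\to\tilde P_t(\log f)(x)$. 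Passing to the limit in the last display proves \eqref{logharnack1} in the bounded case.

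To remove the boundedness, apply the case just established to the truncations $f_n:=(f\wedge n)\vee\tfrac1n$ of a general nonnegative $f\in L^1(X,\mu)+L^\infty(X,\mu)$ and let $n\to\infty$. Since these truncations are not monotone in $f$, one splits $\log f_n$ into its positive and negative parts, which do converge monotonically to those of $\log f$, applies monotone convergence for the probability measures $h_t\delta_x$ and $h_t\delta_y$, and treats the (trivial) case $\tilde P_tf(y)=\infty$ separately, the convention $\log=-\infty$ on $\{f=0\}$ being harmless. I expect this fussy but routine limiting step to be the main obstacle. A clean alternative that avoids the $p\to\infty$ limit altogether is to rerun the interpolation argument in the proof of Theorem \ref{harnack} with $\theta=\log$ in place of $\theta_\epsilon$, invoking Lemmas \ref{lemmaofharnack1}, \ref{SavareL1estimate} and \ref{lemmaofharnack2} directly, as done in \cite{AmbrosioGigliSavare2012}.
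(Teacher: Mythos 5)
Your derivation is correct and is essentially the argument the paper has in mind: it gives no proof of its own but points to the elementary deduction of the log-Harnack inequality from the power Harnack inequality \eqref{harnack2} (as in \cite[Propositions 2.1 and 2.2]{Wang2010}), which is precisely your step of applying \eqref{harnack2} to $f^{1/p}$ and letting $p\to\infty$, the constant $\tfrac{p}{p-1}\cdot\tfrac{Kd(x,y)^2}{2(e^{2Kt}-1)}$ tending to the right limit. The uniform Taylor expansion identifying $\lim_p p\log(\tilde P_t f^{1/p})(x)=\tilde P_t(\log f)(x)$ and the truncation step are handled correctly.
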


\begin{remark}
In fact, applying the log-Harnack inequality \eqref{loghrnack}, we can immediately obtain the strong Feller property of the semigroup $\tilde{P}_t$, i.e., $\tilde{P}_t L^\infty(X,\mu)\subset C_b(X)$ for any $t>0$, by an elementary calculus; see \cite[Propersition 2.3]{Wang2010}. However,  Ambrosio--Gigli--Savar\'{e} \cite[Theorem 3.17]{AmbrosioGigliSavare2012} showed the stronger one, i.e., $\tilde{P}_t L^\infty(X,\mu)\subset \Lip_b(X)$ for any $t>0$, with a different approach.
\end{remark}

Recall \eqref{mass preserving} that the heat flow $\{P_t\}_{t\geq 0}$ possesses the mass preserving property w.r.t. $\mu$, the log-Harnack inequality \eqref{logharnack1} implies the following entropy-cost inequality. The proof is simple and it is verbatim the one on the smooth Riemannian manifold (see e.g. \cite[Thorem 2.4.1]{Wang2013a}).
\begin{corollary}\label{transport}
Let $(X,d,\mu)$ be a $\RCD(K,\infty)$ space with  $K\in \R$ and $\mu$ being a probability measure. Then
$$\mu\big[\big(\tilde{P}_tf^2\big)\log\big(\tilde{P}_tf^2\big)\big]\leq \frac{KW_2(f^2\mu,\mu)^2}{2\left(e^{2Kt}-1\right)},\quad\mu\big(f^2\big)=1.$$
\end{corollary}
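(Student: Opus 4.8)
The final statement is Corollary~\ref{transport}, the entropy-cost inequality.

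The plan is to derive the entropy-cost inequality from the log-Harnack inequality \eqref{logharnack1} by integrating against an optimal coupling. First I would set $g=\tilde P_t f^2$, which is a probability density with respect to $\mu$ since $\mu(f^2)=1$ and $\{P_t\}$ preserves mass (see \eqref{mass preserving}). Apply \eqref{logharnack1} with the role of $f$ played by $g$ itself is not quite what is wanted; rather, the standard trick is to apply the log-Harnack inequality to $f^2$ directly: for all $x,y\in X$,
\begin{eqnarray*}
\tilde P_t(\log f^2)(x)\leq \log\big(\tilde P_t f^2\big)(y)+\frac{Kd(x,y)^2}{2(e^{2Kt}-1)}.
\end{eqnarray*}
Let $\pi$ be an optimal coupling of the probability measures $\nu_0:=f^2\mu$ and $\nu_1:=\mu$ for the cost $d^2$, so that $\int_{X\times X}d(x,y)^2\,\d\pi(x,y)=W_2(f^2\mu,\mu)^2$ (the infimum is attained, as recalled after \eqref{wasserstein}).

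Next I would integrate the displayed inequality with respect to $\pi$. On the left, since the first marginal of $\pi$ is $f^2\mu$ and $\tilde P_t$ is self-adjoint and mass preserving, $\int_X \tilde P_t(\log f^2)\,\d(f^2\mu)=\int_X \log(f^2)\cdot f^2\,\d\mu=\mu\big[f^2\log f^2\big]$; here one uses the identification $(\tilde P_t g)\mu=h_t(g\mu)$ together with symmetry of $P_t$ on $L^2(X,\mu)$, or equivalently the duality $\int (\tilde P_t u)v\,\d\mu=\int u\,\tilde P_t v\,\d\mu$, taking $v=f^2$ and noting $\tilde P_t(f^2)$ is again a probability density. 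On the right, since the second marginal of $\pi$ is $\mu$, $\int_X \log(\tilde P_t f^2)(y)\,\d\mu(y)=\mu\big[\log(\tilde P_t f^2)\big]$, and by Jensen's inequality applied to the concave function $\log$ and the probability measure $(\tilde P_t f^2)\mu$ one has $\mu\big[\log(\tilde P_t f^2)\big]\leq \log\mu\big[\tilde P_t f^2\big]=\log 1=0$, using mass preservation once more. The last term integrates to $\tfrac{K}{2(e^{2Kt}-1)}W_2(f^2\mu,\mu)^2$. Combining the three contributions yields exactly $\mu\big[(\tilde P_t f^2)\log(\tilde P_t f^2)\big]\leq \tfrac{K W_2(f^2\mu,\mu)^2}{2(e^{2Kt}-1)}$ once one also notes, by Jensen again (the entropy of a probability density is nonnegative when $\mu$ is a probability measure, as recalled in Section~2.2), that the left-hand side $\mu[g\log g]$ with $g=\tilde P_t f^2$ is well defined in $[0,\infty)$ and the manipulation of $\mu[f^2\log f^2]$ on the left of the integrated inequality is legitimate.

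The main obstacle is not conceptual but a matter of integrability and measurability bookkeeping: one must ensure that $\log f^2$ and $\log(\tilde P_t f^2)$ are $\pi$-integrable (or at least that the relevant integrals make sense in $[-\infty,\infty]$ and the inequality survives passage to integrals), which is handled by a truncation/approximation argument — first assuming $f^2$ bounded above and below away from $0$, so everything is bounded, then removing the restriction by monotone/dominated convergence while using that $x\log x$ is bounded below. One should also justify moving $\tilde P_t$ across the integral against $f^2\mu$ via Fubini, which is valid because $\tilde P_t(\log f^2)(x)=\int \log f^2\,\d(h_t\delta_x)$ and $h_t(f^2\mu)=\int h_t\delta_x\,f^2(x)\,\d\mu(x)$ (linearity of the heat flow on measures). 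These are exactly the points where one invokes that the proof is ``verbatim the one on the smooth Riemannian manifold'' as in \cite[Theorem 2.4.1]{Wang2013a}, so I would present the argument compactly and refer there for the routine approximation details.
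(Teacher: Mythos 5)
There is a genuine gap, and it sits exactly at the point where you decide what to plug into the log-Harnack inequality. You dismiss the substitution $f\rightsquigarrow \tilde P_tf^2$ as ``not quite what is wanted'' and instead apply \eqref{logharnack1} to $f^2$; but the former is precisely the right move. With your choice, integrating against the optimal coupling and using self-adjointness gives on the left
$\int_X \tilde P_t(\log f^2)\,\d(f^2\mu)=\int_X (\log f^2)\,\tilde P_t(f^2)\,\d\mu=\mu\big[(\tilde P_tf^2)\log f^2\big]$,
\emph{not} $\mu[f^2\log f^2]$ as you wrote (duality moves $\tilde P_t$ onto $f^2$, it does not delete it), and certainly not the quantity $\mu[(\tilde P_tf^2)\log(\tilde P_tf^2)]$ appearing in the corollary. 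This is not a bookkeeping issue: since $\tilde P_tf^2$ and $f^2$ are both probability densities, nonnegativity of relative entropy gives $\mu[(\tilde P_tf^2)\log f^2]\leq \mu[(\tilde P_tf^2)\log(\tilde P_tf^2)]$, so the bound you actually derive is \emph{weaker} than the one claimed and does not imply it. (The intermediate identity $\mu[f^2\log f^2]\leq KW_2^2/(2(e^{2Kt}-1))$ that your left-hand side would produce is also false in general: for $K>0$ the right-hand side tends to $0$ as $t\to\infty$ while the left-hand side is a fixed positive number unless $f^2\equiv1$.)

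The repair is the paper's argument: apply \eqref{logharnack1} with $\tilde P_tf^2$ in place of $f$, obtaining
$\tilde P_t\big(\log \tilde P_tf^2\big)(x)\leq \log\big(\tilde P_{2t}f^2\big)(y)+Kd(x,y)^2/\big(2(e^{2Kt}-1)\big)$.
Now integrating against the optimal coupling of $f^2\mu$ and $\mu$, self-adjointness turns the left side into $\int_X(\log\tilde P_tf^2)\,\tilde P_t(f^2)\,\d\mu=\mu\big[(\tilde P_tf^2)\log(\tilde P_tf^2)\big]$ --- exactly the target --- while Jensen's inequality and mass preservation kill the term $\mu[\log\tilde P_{2t}f^2]\leq\log\mu(\tilde P_{2t}f^2)=0$ on the right, just as in your treatment of that term. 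The rest of your write-up (attainment of the optimal coupling, Jensen, the truncation remarks) is fine and carries over unchanged once the correct substitution is made.
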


\begin{proof} Suppose that $\mu\big(f^2\big)=1$. Applying the log-Harnack inequality of Proposition \ref{loghrnack} to $\tilde{P}_tf^2$ in place of $f$, we obtain for any $x,y\in X$,
$$\tilde{P}_t\big(\log \tilde{P}_tf^2\big)(x)\leq \log\big(\tilde{P}_{2t}f^2\big)(y)+ \frac{ K d(x,y)^2}{2\left(e^{2Kt}-1\right)}.$$
Integrating w.r.t. the optimal coupling of $f^2\mu$ and $\mu$ for the $L^2$-transportation cost, we have
\begin{eqnarray*}
\mu\big[\big(\tilde{P}_tf^2\big)\log\big(\tilde{P}_tf^2\big)\big]&\leq& \frac{K W_2(f^2\mu,\mu)^2}{2\left(e^{2Kt}-1\right)}+
\mu\big[\log\big(\tilde{P}_{2t}f^2\big)\big]\\
&\leq& \frac{KW_2(f^2\mu,\mu)^2}{2\left(e^{2Kt}-1\right)},
\end{eqnarray*}
where the second inequality follows from the Jensen inequality and the mass preserving property \eqref{mass preserving}.
\end{proof}

For any $x\in X$ and $t>0$, since $h_t(\delta_x)\ll \mu$, $h_t(\delta_x)$ has a density w.r.t. $\mu$, denoted by $q_t[x]$, such that the map $(x,y)\in X\times X\mapsto q_t[x](y)$ can be chosen to be $\mu\times\mu$-measurable, and
$$\tilde{P}_tf(x)=\int_Xf(y)q_t[x](y)\,\d\mu(y),$$
for any $\mu$-measurable and semi-integrable function $f$. It can be shown that $q_t[x](y)=q_t[y](x)$ $\mu\times\mu$-a.e. in $X\times X$, for all $t>0$. Hence, we denote it  by $\tilde{p}_t(x,y)$ and call it the heat kernel of the semigroup $\tilde{P}_t$. See \cite[Theorem 7.1]{AmbrosioGigliMondinoRajala} for more properties of $\tilde{p}_t$.

In particular, when $\mu$ is a probability measure, the symmetry and the log-Harnack inequality \eqref{loghrnack} immediately imply that for every $t>0$ and $x\in X$ (see \cite[Corollary 4.7]{AmbrosioGigliSavare2012}),
\begin{eqnarray*}\label{positivitykernel}
\tilde{p}_t(x,y)\geq \exp\left\{-\frac{K d(x,y)^2}{2(e^{Kt}-1)}\right\},\quad\mbox{for }\mu\mbox{-a.e. }y\in X.
\end{eqnarray*}

Wang's theorem \cite[Theorem 1.1]{Wang2001} expresses roughly that, on the smooth Riemannian manifold, a strong enough Gaussian concentration implies the log-Sobolev inequality when the Ricci curvature is bounded from below by a non-positive constant. Now we are ready to generalize it to the $\RCD(K,\infty)$ space and prove it following the same idea of the aforementioned reference.

Given $p,q\in [1,\infty]$, define the operator norm of $\tilde{P}_t$ by
$$\|\tilde{P}_t\|_{p\rightarrow q}=\sup_{f\in L^p\cap L^2(X,\mu)\setminus \{0\}} \frac{\|\tilde{P}_t f\|_{L^q(X,\mu)}}{\|f\|_{L^p(X,\mu)}}.$$
In fact, the Markovian property allows to extend $\tilde{P}_t$ to an operator in $L^p(X,\mu)$ so that the range $L^p\cap L^2(X,\mu)$ of $f$ can be replaced by $L^p(X,\mu)$.
\begin{theorem}\label{wangcriteria}
Let $(X,d,\mu)$ be a $\RCD(-K,\infty)$ space with  $K\geq 0$ and $\mu$ being a probability measure. Assume that there exists $\epsilon>0$ such that
\begin{eqnarray}\label{integral}
\mu\big(e^{(K/2+\epsilon)d(o,\cdot)^2}\big)<\infty,\quad\mbox{for some fixed point }o\in X.
\end{eqnarray}
Then the log-Sobolev inequality
\begin{eqnarray}\label{logsobolev}
\mu(f^2\log f^2)\leq C\mathcal{E}(f,f),\quad f\in \mathcal{D}(\mathcal{E}),\,\ \mu(f^2)=1
\end{eqnarray}
holds for some constant $C>0$.
\end{theorem}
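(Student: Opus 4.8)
The plan is to follow Wang's strategy from \cite{Wang2001}: deduce from the dimension free Harnack inequality of Theorem \ref{harnack} (applied to the $\RCD(-K,\infty)$ space, so that the curvature parameter there equals $-K$), together with the Gaussian integrability \eqref{integral}, a hyperboundedness estimate for the heat semigroup; obtain from it a \emph{defective} logarithmic Sobolev inequality $\Ent_\mu(f^2)\le C_1\mathcal{E}(f,f)+C_2$ for $\mu(f^2)=1$; and finally upgrade it to the tight inequality \eqref{logsobolev} by means of a Poincar\'e (spectral gap) inequality together with Rothaus' tightening lemma.

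For the first step, fix $f\ge0$ and $p>1$; by \eqref{harnack2} with $K$ replaced by $-K$, $(\tilde P_tf)^p(x)\le \tilde P_t(f^p)(y)\exp\{a_t\,d(x,y)^2\}$ with $a_t=\frac{pK}{2(p-1)(1-e^{-2Kt})}$. Integrating in $y$ against the invariant probability $\mu$ and using the mass preserving property \eqref{mass preserving} gives $(\tilde P_tf)^p(x)\le \|f\|_{L^p}^p\,/\,\mu(e^{-a_td(x,\cdot)^2})$; raising to the power $q/p$, integrating in $x$, and bounding $\mu(e^{-a_td(x,\cdot)^2})\ge e^{-a_t\mu(d(x,\cdot)^2)}$ by Jensen's inequality and $\mu(d(x,\cdot)^2)\le (1+\eta)d(o,x)^2+C_\eta$, one obtains $\|\tilde P_tf\|_{L^q}\le C\,\|f\|_{L^p}$ whenever $\int_X e^{q a_t(1+\eta)d(o,x)^2/p}\,\d\mu(x)<\infty$. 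Since $a_t\downarrow\frac{pK}{2(p-1)}$ as $t\to\infty$ and $\frac{p+1}{p-1}\to1$ as $p\to\infty$, taking $q=p+1$, then $\eta$ small, then $p$ large, then $t_0$ large makes the coefficient $\frac{q a_{t_0}(1+\eta)}{p}$ strictly less than $K/2+\epsilon$ — this is exactly where the \emph{strict} inequality $K/2+\epsilon>K/2$ in \eqref{integral} is used — so that the integral converges and $\|\tilde P_{t_0}\|_{p\to q}<\infty$ for some $1<p<q<\infty$.

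From this hyperboundedness one passes to a defective logarithmic Sobolev inequality by standard Dirichlet-form theory (see e.g. \cite{Wang2005}): a single Riesz--Thorin interpolation of $\tilde P_{t_0}\colon L^p\to L^q$ against the $L^1$-contractivity of $\tilde P_{t_0}$ gives $\|\tilde P_{t_0}\|_{2\to 2+\delta}<\infty$ for some $\delta>0$, and the defective form of Gross' hypercontractivity theorem then yields $\Ent_\mu(f^2)\le C_1\mathcal{E}(f,f)+C_2$ for $\mu(f^2)=1$. (Such a defective inequality can also be obtained directly from the entropy--cost inequality of Corollary \ref{transport}, whose coefficient $\frac{K}{2(1-e^{-2Kt})}$ tends to $\frac{K}{2}$, after controlling $W_2(f^2\mu,\mu)^2$ by $\Ent_\mu(f^2)$ up to an additive constant — via the triangle inequality and the variational formula for the entropy fed by \eqref{integral} — and bounding $\Ent_\mu(f^2)-\Ent_\mu(\tilde P_tf^2)=\int_0^t I(\tilde P_uf^2)\,\d u$ by $\mathcal{E}(f,f)$ through the growth estimate $I(\tilde P_ug)\le e^{2Ku}I(g)$ for the Fisher information along the heat flow on $\RCD(-K,\infty)$; again it is the strictness $K/2+\epsilon>K/2$ that lets the $\Ent_\mu(f^2)$ term on the right be absorbed for $t$ large.) The tightening then reduces to a Poincar\'e inequality $\Var_\mu(f)\le C_P\mathcal{E}(f,f)$, after which Rothaus' lemma produces \eqref{logsobolev}; the spectral gap needed here can in turn be extracted from the Harnack inequality and \eqref{integral} by a similar (and simpler) argument — e.g. from a variance--cost estimate, or from the lower heat-kernel bound implied by the log-Harnack inequality of Proposition \ref{loghrnack} — using that $\mu$ is a probability and the space is connected so that $\{\tilde P_t\}_{t\ge0}$ is ergodic.

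The structure above is exactly Wang's; some care is needed in transcribing the auxiliary computations to the present metric--measure framework — working consistently with the pointwise version $\tilde P_t$ and its continuity and Lipschitz-regularizing properties recalled in Section 3, with the chain and Leibniz rules for the carr\'e du champ $\Gamma$, and with $\mu$-a.e.\ versus everywhere statements — but the genuinely delicate point is the tightening step, since a defective logarithmic Sobolev inequality does not by itself imply the Poincar\'e inequality needed to remove the additive constant $C_2$.
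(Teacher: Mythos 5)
Your proposal follows essentially the same route as the paper: the Harnack inequality \eqref{harnack2} (with curvature constant $-K$) together with the mass-preserving property yields a pointwise Gaussian upper bound on $\tilde P_tf$, the integrability hypothesis \eqref{integral} converts this into $\|\tilde P_t\|_{p\to q}<\infty$ for suitable large $p$ and $t$, Riesz--Thorin interpolation against the $L^1$-contractivity and Gross's theorem give the defective log-Sobolev inequality, and the defect is removed using irreducibility of the form. The only point where the paper is more concrete than you are is the tightening step, which you rightly flag as delicate: instead of producing a Poincar\'e inequality and invoking Rothaus' lemma, the paper gets irreducibility from the $\mu\times\mu$-a.e.\ positivity of the heat kernel via Aida's lemma and then applies Wang's criterion \cite[Corollary 1.3]{Wang2013c} directly to the defective inequality --- which is precisely the route you gesture at with the lower heat-kernel bound and ergodicity.
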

\begin{proof}
By \eqref{harnack2}, for any $f\in L^1(X,\mu)$, $p>1$, $t\geq 0$ and $x,y\in X$, it holds
\begin{eqnarray}\label{harnack3}
\big|\tilde{P}_tf(x) \big|^p\leq \tilde{P}_t|f|^p(y)\exp\left[\frac{p K d(x,y)^2}{2(p-1)(1-e^{-2Kt})}\right].
\end{eqnarray}
For any $p>2$, $f\geq 0$ with $\mu(f^p)=1$ and some fixed point $o\in X$, by \eqref{harnack3} and the mass preserving property \eqref{mass preserving}, we have
\begin{eqnarray*}
1=\mu(\tilde{P}_tf^p)&\geq& \big(\tilde{P}_tf(x)\big)^p\int_X\exp\left[-\frac{p K d(x,y)^2}{2(p-1)(1-e^{-2Kt})}\right]\,\d\mu(y)\\
&\geq& \big(\tilde{P}_tf(x)\big)^p \mu(B(o,1))\exp\left[-\frac{p K (d(o,x)+1)^2}{2(p-1)(1-e^{-2Kt})}\right].
\end{eqnarray*}
Note that $\mu(B(o,1))>0$. Hence, there exists a constant $c>0$ such that for any $t\geq 0$ and $x\in X$,
\begin{eqnarray}\label{wang}
\tilde{P}_tf(x)\leq c\exp\left[\frac{K (d(o,x)+1)^2}{2(p-1)(1-e^{-2Kt})}\right].
\end{eqnarray}
Combining \eqref{wang} with \eqref{integral}, we get
\begin{eqnarray*}
\mu\big((\tilde{P}_tf)^{p+\delta}\big)\leq c^{p+\delta}\int_X\exp\left[\frac{(p+\delta) K (d(o,\cdot)+1)^2}{2(p-1)(1-e^{-2Kt})}\right]\,\d\mu<\infty,
\end{eqnarray*}
for small $\delta>0$ and big $p$ and $t$. Let $\sigma=\frac{p-2}{2(p-1)}$, which belongs to $(0,\frac{1}{2})$, and let $r>2$ such that $\frac{1}{r}=\sigma+\frac{1-\sigma}{p+\delta}$. Since $\{\tilde{P}_t\}_{t\geq 0}$ is a contraction semigroup of linear operators on $L^1(X,\mu)$, by the Riesz-Thorin interpolation theorem (see e.g.  \cite[Section 1.1.5]{Davies1989}), we obtain $\|\tilde{P}_t\|_{2\rightarrow r}<\infty$ for some $t>0$, which implies that the defective log-Sobolev inequality
\begin{eqnarray}\label{DLSI}
\mu(f^2\log f^2)\leq C_1\mathcal{E}(f,f)+C_2,\quad f\in \mathcal{D}(\mathcal{E}),\,\ \mu(f^2)=1
\end{eqnarray}
holds for some constants $C_1, C_2>0$ (see e.g. \cite{Gross1993}). As we know the heat kernel $\tilde{p}_t(x,y)>0$ for $\mu\times\mu$-a.e. $(x,y)\in X\times X$, by \cite[Lemma 2.6]{Aida1998}, $\tilde{P}_t$ is ergodic which is equivalent to the irreducibility of $(\mathcal{E}, \mathcal{D}(\mathcal{E}))$, i.e., $\mathcal{E}(f,f)=0$ implying $f$ is constant, in this setting.  Recall that $(\mathcal{E}, \mathcal{D}(\mathcal{E}))$ is conservative, i.e., $\tilde{P}_t1=1$. Applying \cite[Corollary 1.3]{Wang2013c}, we obtain the desired log-Sobolev inequality \eqref{logsobolev}.
\end{proof}

\begin{remark}
Indeed, when $K>0$, the log-Sobolev inequality \eqref{logsobolev} was obtained on the $\RCD(K,\infty)$ space $(X,d,\mu)$ with $\mu$ being a probability measure; see (6.10) in \cite{AmbrosioGigliSavare2011b} for example.
\end{remark}

Recall that $\tilde{P}_t$ is called ultracontractive if $\|\tilde{P}_t\|_{2\rightarrow \infty}<\infty$ for all $t>0$, supercontractive if $\|\tilde{P}_t\|_{2\rightarrow 4}<\infty$ for all $t>0$, and hypercontractive if $\|\tilde{P}_t\|_{2\rightarrow 4}<\infty$ for some $t>0$. These contractivity properties were studied extensively via various functional inequalities and heat kernel upper bounds; see e.g. \cite{BakryEmery,DaviesSimon1984,Davies1989,Gross1993,Wang2005} and references therein. Since $\tilde{P}_t$ is symmetric, it is easy to observe that $\|\tilde{P}_t\|_{1\rightarrow \infty}=\|\tilde{P}_{t/2}\|^2_{2\rightarrow \infty}$ for all $t>0$ (see e.g. \cite[Lemma 2.1.2]{Davies1989} for a simple proof), and hence $\|\tilde{P}_t\|_{1\rightarrow \infty}<\infty$ for all $t>0$ is equivalent to $\|\tilde{P}_t\|_{2\rightarrow \infty}<\infty$ for all $t>0$. We mention here that, under the ultracontractivity assumption of $\tilde{P}_t$, the global Lipschitz continuity of the heat kernel $p_t(x,y)$ w.r.t. the spatial variables $x$ and $y$ is obtained in \cite[Proposition 6.4]{AmbrosioGigliSavare2011b}.

The following results is an analogous of the one on the smooth Riemannian manifold due to R\"{o}ckner--Wang \cite{RochnerWang2003}. The idea of its proof is the same to the smooth case and we omit it here to keep the note short (see also the proof of Theorem \ref{wangcriteria}).
\begin{corollary}\label{ultra}
Let $(X,d,\mu)$ be a $\RCD(K,\infty)$ space with  $K\in \R$ and $\mu$ being a probability measure. Then, for some (hence any) fixed point $o$ in $X$,
\begin{itemize}
\item[\rm(1)] $\tilde{P}_t$ is ultracontractive if and only if $\|\tilde{P}_t\exp [\lambda d(o,\cdot)^2] \|_{L^\infty(X,\mu)}<\infty$ for any $t,\lambda>0$, and, moreover, there exist some constants $\lambda_1, \lambda_2>0$ such that
\begin{eqnarray}\label{ultra-1}
\|\tilde{P}_t\|_{1\rightarrow \infty}\leq e^{\lambda_1/t}\|\tilde{P}_{t/4}\exp\big(\lambda_2 d(o,\cdot)^2/t\big)\|_{L^\infty(X,\mu)},\quad\mbox{for any }t>0;
\end{eqnarray}

\item[\rm(2)] $\tilde{P}_t$ is supercontractive if and only if $\mu(\exp [\lambda d(o,\cdot)^2] )<\infty$ for any $\lambda>0$;

\item[\rm(3)] if there exists a constant $\lambda> -K/2$ such that $\mu(e^{\lambda d(o,\cdot)^2})<\infty$, then $\tilde{P}_t$ is hypercontractive.
\end{itemize}
\end{corollary}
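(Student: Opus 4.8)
The plan is to follow the scheme of R\"ockner--Wang \cite{RochnerWang2003}, feeding in the dimension free Harnack inequality of Theorem \ref{harnack}, the mass preservation \eqref{mass preserving}, the self-adjointness of $\tilde P_t$, the elementary identity $\|\tilde P_t\|_{1\to\infty}=\|\tilde P_{t/2}\|_{2\to\infty}^2$, the Riesz--Thorin interpolation theorem, and the classical equivalences (Gross; Davies--Simon) between hyper-, super- and ultra-contractivity and, respectively, the defective, the super and the ultra logarithmic Sobolev inequalities. Throughout write $\rho:=d(o,\cdot)$; since $\rho$ is $1$-Lipschitz one has $\sqrt{\Gamma(\rho)}\le1$ $\mu$-a.e., and by \eqref{mass preserving} $\mu(e^{\lambda\rho^2})=\mu(\tilde P_te^{\lambda\rho^2})\le\|\tilde P_te^{\lambda\rho^2}\|_{L^\infty(X,\mu)}$, so the right-hand condition in (1) already forces $\mu(e^{\lambda\rho^2})<\infty$ for every $\lambda>0$. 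I would establish (2) first and invoke it inside (1).

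For the nontrivial implication of (1), fix $f\ge0$ with $\mu(f^2)=1$ and $0<s<t$, and set $C(s):=K/(e^{2Ks}-1)$ (so $C(s)>0$ and $C(s)\asymp1/(2s)$ as $s\downarrow0$). Applying \eqref{harnack2} with $p=2$, multiplying through by $e^{2C(s)\rho(y)^2}$, using $d(x,y)^2\le2\rho(x)^2+2\rho(y)^2$ together with the self-adjointness of $\tilde P_s$ to transfer the weight onto $f^2$, and integrating in $y$ against the probability measure $\mu$, one reaches the weighted estimate $\tilde P_sf(x)\le e^{C(s)\rho(x)^2}\,\|\tilde P_se^{2C(s)\rho^2}\|_{L^\infty(X,\mu)}^{1/2}$. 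The spatial weight is then removed by writing $\tilde P_t=\tilde P_{t-s}\tilde P_s$ and bounding $\tilde P_{t-s}(e^{C(s)\rho^2})\le\|\tilde P_{t-s}e^{C(s)\rho^2}\|_{L^\infty(X,\mu)}<\infty$; this gives $\|\tilde P_t\|_{2\to\infty}<\infty$, hence ultracontractivity via $\|\tilde P_t\|_{1\to\infty}=\|\tilde P_{t/2}\|_{2\to\infty}^2$, and choosing $s$ proportional to $t$ and tracking the $t$-dependence of $C(s)$ produces the quantitative bound \eqref{ultra-1}. Conversely, ultracontractivity implies supercontractivity (on a probability space $\|g\|_4\le\|g\|_\infty^{1/2}\|g\|_2^{1/2}$), so by (2) $\mu(e^{\lambda\rho^2})<\infty$ for all $\lambda>0$, and then $\tilde P_t(e^{\lambda\rho^2})\le\|\tilde P_t\|_{1\to\infty}\,\mu(e^{\lambda\rho^2})<\infty$.

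For (2) the two implications split as follows. ``$\Leftarrow$'': repeating the computation in the proof of Theorem \ref{wangcriteria}, \eqref{harnack2} yields, for $f\ge0$ with $\mu(f^p)=1$, a pointwise bound $\tilde P_tf(x)\le c\exp[c'(p,t)(\rho(x)+1)^2]$, whence $\mu((\tilde P_tf)^{p+\delta})<\infty$ for every $p>1,\ \delta>0,\ t>0$ under the standing hypothesis; interpolating (Riesz--Thorin) this $L^p\to L^{p+\delta}$ bound with the $L^1$-contraction of $\tilde P_t$, and letting $p\to2^+$ with $\delta$ large, one gets $\|\tilde P_t\|_{2\to q}<\infty$ for all $q<\infty$ and all $t>0$, i.e.\ supercontractivity. ``$\Rightarrow$'': supercontractivity yields the super logarithmic Sobolev inequality $\mu(f^2\log f^2)-\mu(f^2)\log\mu(f^2)\le\beta\,\mathcal{E}(f,f)+\gamma(\beta)\mu(f^2)$ for every $\beta>0$ with $\gamma(\beta)<\infty$ ($f\in\mathcal{D}(\mathcal{E})$); testing it with truncations of $e^{a\rho/2}$ and using $\sqrt{\Gamma(\rho)}\le1$ turns it into $a\varphi'(a)-\varphi(a)\le\tfrac{\beta}{4}a^2+\gamma(\beta)$ for $\varphi(a):=\log\mu(e^{a\rho})$, which integrates to $\mu(e^{a\rho})\le\exp[\tfrac{\beta}{4}a^2+C_\beta a]$; this gives the Gaussian tail $\mu(\rho>R)\lesssim e^{-R^2/\beta}$, and since $\beta>0$ is arbitrary, $\mu(e^{\lambda\rho^2})<\infty$ for all $\lambda>0$.

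Finally, (3) uses the same pointwise bound $\tilde P_tf(x)\le c\exp[c'(p,t)(\rho(x)+1)^2]$ (for $f\ge0$, $\mu(f^p)=1$): the exponent $(p+\delta)c'(p,t)$ that governs $\mu((\tilde P_tf)^{p+\delta})$ can be made smaller than any prescribed number exceeding $-K/2$ by taking $p$ and $t$ large, so the hypothesis $\mu(e^{\lambda\rho^2})<\infty$ for some $\lambda>-K/2$ gives $\|\tilde P_{t_0}\|_{p_0\to p_0+\delta}<\infty$ for suitable $p_0>1,\ \delta>0,\ t_0>0$; interpolation with the $L^1$-contraction then yields $\|\tilde P_{t_0}\|_{2\to2+\delta'}<\infty$ for some $\delta'>0$, which by Gross's theorem is equivalent to a defective logarithmic Sobolev inequality, hence to hypercontractivity (when $K>0$ one has the log-Sobolev inequality, hence hypercontractivity, with no extra hypothesis; see the Remark after Theorem \ref{wangcriteria}). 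The step I expect to be most delicate is the Herbst-type bootstrap in ``(2)$\,\Rightarrow$'': besides justifying the use of $\rho$ as a test function, one must secure enough a priori integrability to integrate the differential inequality uniformly in the truncation level --- this is where supercontractivity together with the ergodicity of $\tilde P_t$ (equivalently the positivity of $\tilde p_t$, hence a spectral gap) enters, exactly as in the smooth case; a secondary point is the constant bookkeeping needed to pass from the weighted estimate in (1) to \eqref{ultra-1}.
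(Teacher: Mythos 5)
Your proposal is correct and follows exactly the route the paper intends: the paper omits the proof, stating only that it is the R\"ockner--Wang argument adapted via the Harnack inequality of Theorem \ref{harnack} (as in the proof of Theorem \ref{wangcriteria}), which is precisely the scheme you carry out --- the weighted Harnack estimate with $p=2$ plus the semigroup splitting for (1), the Harnack/Riesz--Thorin interpolation and the Herbst-type concentration from the super log-Sobolev inequality for (2), and the exponent bookkeeping with $p,t$ large for (3). The delicate points you flag (truncation in the Herbst step, constant tracking for \eqref{ultra-1}) are genuine but handled exactly as in the smooth case.
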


\medskip

{\bf Acknowledgements.} The author would like to thank Dr. Dejun Luo for his careful reading of the first version of this work in which the reference measure is assumed to be a probability measure. And thanks are also given to an anonymous referee for his or her careful reading and meticulous comments, which make the presentation of this paper better.

\end{document}